\theoremstyle{plain}
\newtheorem{theorem}{Theorem}[section]
\newtheorem{lemma}[theorem]{Lemma}
\newtheorem{corollary}[theorem]{Corollary}
\newtheorem{proposition}[theorem]{Proposition}
\theoremstyle{definition}
\newtheorem{definition}[theorem]{Definition}
\newtheorem{remark}[theorem]{Remark}
\newtheorem{example}[theorem]{Example}
\newtheorem{question}[theorem]{Question}
\newtheorem{conjecture}[theorem]{Conjecture}
\newtheorem{thmdef}[theorem]{Theorem/Definition}
\newcommand{\E}[1]{\exists #1\,}
\newcommand{\A}[1]{\forall #1\,}
\newcommand{\ie}{i.e.\ }
\newcommand{\TRG}{T_\text{RG}}
\newcommand{\Kfeq}{K_{\text{feq}}}
\newcommand{\Tfeq}{T^*_{\text{feq}}}
\newcommand{\KCPZ}{K_{\text{CPZ}}}
\newcommand{\TCPZ}{T_{\text{CPZ}}}
\newcommand{\Fraisse}{Fra\"iss\'e\xspace}
\newcommand{\Erdos}{Erd\H{o}s\xspace}
\def\Ind#1#2{#1\setbox0=\hbox{$#1x$}\kern\wd0\hbox to 0pt{\hss$#1\mid$\hss}\lower.9\ht0\hbox to 0pt{\hss$#1\smile$\hss}\kern\wd0}
\def\Notind#1#2{#1\setbox0=\hbox{$#1x$}\kern\wd0\hbox to 0pt{\mathchardef
	\nn=12854\hss$#1\nn$\kern1.4\wd0\hss}\hbox to
	0pt{\hss$#1\mid$\hss}\lower.9\ht0 \hbox to
	0pt{\hss$#1\smile$\hss}\kern\wd0}
\newcommand{\ind}[1][]{\mathop{\mathpalette\Ind{}^{\!\!\!\!\rlap{$\scriptscriptstyle\textnormal{#1}$}\,\,\,\,}}}
\begin{document}

\title[Disjoint $n$-amalgamation]{Disjoint $n$-amalgamation and pseudofinite countably categorical theories}
\author{Alex Kruckman}

\begin{abstract}
Disjoint $n$-amalgamation is a condition on a complete first-order theory specifying that certain locally consistent families of types are also globally consistent. In this paper, we show that if a countably categorical theory $T$ admits an expansion with disjoint $n$-amalgamation for all $n$, then $T$ is pseudofinite. All theories which admit an expansion with disjoint $n$-amalgamation for all $n$ are simple, but the method can be extended, using filtrations of \Fraisse classes, to show that certain non-simple theories are pseudofinite. As case studies, we examine two generic theories of equivalence relations, $\Tfeq$ and $\TCPZ$, and show that both are pseudofinite. The theories $\Tfeq$ and $\TCPZ$ are not simple, but they have NSOP$_1$. This is established here for $\TCPZ$ for the first time.
\end{abstract}

\maketitle

\section{Introduction}\label{sec:intro}

The theory $\TRG$ of the random graph (also called the Rado graph) arises naturally in two distinct ways. First, the random graph is the \Fraisse limit of the class of all finite graphs $\mathcal{G}$: the unique countable ultrahomogeneous graph which embeds a copy of each finite graph. Second, $\TRG$ is the almost-sure theory of finite graphs, in the sense of zero-one laws: letting $\mathcal{G}(n)$ be the set of (labeled) graphs of size $n$ and $\mu_n$ the uniform measure on $\mathcal{G}(n)$, we have, for every sentence $\varphi$, $$\lim_{n\to\infty} \mu_n(\{G\in \mathcal{G}(n)\mid G\models \varphi\}) = 1 \iff \varphi\in \TRG.$$

The latter observation shows that $\TRG$ is pseudofinite; that is, every sentence in the theory has a finite model. In fact, the probabilistic argument shows that each sentence $\varphi\in \TRG$ has many finite models. For large $n$, most finite graphs of size $n$ satisfy $\varphi$. 

The situation is very different for the class $\mathcal{G}_\triangle$ of finite triangle-free graphs. $\mathcal{G}_\triangle$ has a \Fraisse limit, the generic triangle-free graph~\cite{Henson} (also called the Henson graph). $\mathcal{G}_\triangle$ also has a zero-one law for the uniform measures $\mu_n$ on $\mathcal{G}_\triangle(n)$, but its almost-sure theory diverges from its generic theory. Indeed, \Erdos, Kleitman, and Rothschild~\cite{EKR} showed that almost all large finite triangle-free graphs are bipartite, and hence do not contain any cycles of odd length, in contrast to the generic triangle-free graph. 

So the probabilistic argument that showed that the theory of the random graph is pseudofinite fails for the generic triangle-free graph. In fact, it is still unknown whether the theory of the generic triangle-free graph is pseudofinite (see~\cite{Cherlin, Cherlin2}). This state of affairs suggests the following very general question:

\begin{question}\label{question:q1}
When does a \Fraisse limit have a pseudofinite theory?
\end{question}

There are, essentially, two ways to show that a theory $T$ is pseudofinite. The first way is to construct finite structures which satisfy arbitrary finite subsets of $T$. An example in the case of the random graph is the sequence of Paley graphs: For each prime power $q \equiv 1 (\mathrm{mod}\, 4)$, define a graph with domain the finite field $\mathbb{F}_q$, putting an edge between distinct elements $a$ and $b$ just in case $a-b$ is a square in $\mathbb{F}_q$. Then the theories of the Paley graphs converge to $\TRG$ (see \cite{BEH} for details, and \cite{BR} for other explicit constructions).

The second way is via a probabilistic argument. Usually, this amounts to specifying a probability measure $\mu_n$ on some class $K(n)$ of finite $L$-structures for all $n\in \omega$, such that $$\lim_{n\to\infty} \mu_n(\{A\in K(n)\mid A\models \varphi\}) = 1 \iff \varphi\in T.$$ 

The first method has the advantage of being more explicit, and the constructions may be of combinatorial interest. But the second method tells us something more, assuming that the measures $\mu_n$ are natural enough: not only do the sentences of $T$ have finite models, but \emph{most} large structures in some class satisfy the sentences in $T$. Of course, the meaning of ``natural" is left intentionally vague. For example, the measure $\mu_n$ should not concentrate on the $n^\mathrm{th}$ element of some explicit sequence! Refining our question,

\begin{question}\label{question:q2}
When does a \Fraisse limit have a pseudofinite theory for a good probabilistic reason? For example, when is it the almost-sure theory for a natural sequence $(K(n),\mu_n)_{n\in\omega}$ of classes of finite structures equipped with probability measures?
\end{question}

An example of a \Fraisse limit which is pseudofinite, but \emph{not} for a good probabilistic reason, is the vector space $V$ of countably infinite dimension over a finite field. The finite models of sentences in $\mathrm{Th}(V)$ are few and far between, existing only in certain finite sizes and unique up to isomorphism in those sizes. This is one of a whole family of examples of a similar character, the smoothly approximable structures, studied by Kantor, Liebeck, and Macpherson in \cite{KLM} and classified by Cherlin and Hrushovski in \cite{CHBook}. Smoothly approximable structures are essentially algebraic: they are coordinatized by certain geometries coming from vector spaces equipped with bilinear forms.

The main purpose of this paper is advance a claim that ``combinatorial" \Fraisse limits (in contrast to the algebraic smoothly approximable structures) which are pseudofinite tend to be pseudofinite for a good probabilistic reason, and moreover that this good probabilistic reason tends to rely on a combinatorial condition, disjoint $n$-amalgamation, which generalizes the disjoint (or ``strong") amalgamation property for \Fraisse classes with trivial algebraic closure. 

The starting point is Theorem~\ref{thm:main}, which shows that for a countably categorical theory, disjoint $n$-amalgamation for all $n$ is a sufficient condition for pseudofiniteness. The hypothesis of disjoint $n$-amalgamation for all $n$ is very strong; however, almost all examples of ``combinatorial" countably categorical theories which are known to be pseudofinite either have disjoint $n$-amalgamation for all $n$, or are reducts of theories with disjoint $n$-amalgamation for all $n$. Such theories are simple (in the sense of the model-theoretic dividing line), see Theorem~\ref{thm:simple}. The only exceptions (that I am aware of at the time of this writing) are built from equivalence relations. 

In \cite{KP}, Kim and Pillay made the ``rather outrageous conjecture" that every pseudofinite countably categorical theory is simple. The generic theory of a parameterized family of equivalence relations, $\Tfeq$, was suggested by Shelah as a counterexample to this conjecture. However, to my knowledge, no proof that $\Tfeq$ is pseudofinite has appeared in the literature.

In this paper, I demonstrate pseudofiniteness of $\Tfeq$ (Section~\ref{sec:Tfeq}), as well as another generic theory of equivalence relations, $\TCPZ$ (Section~\ref{sec:TCPZ}), which was introduced (and shown to not be simple) by Casanovas, Pel\'{a}ez, and Ziegler \cite{CPZ}. In both cases, the argument relies on a method of filtering the relevant \Fraisse class as a union of simpler \Fraisse classes, each of which admits an expansion to a countably categorical theory with disjoint $n$-amalgamation for all $n$. This shows that the pseudofiniteness of these examples, too, can be viewed as a consequence of a probabilistic argument involving disjoint $n$-amalgamation. An interesting feature of this method is that each sentence of the theory is shown to be in the almost-sure theory for a sequence $(K(n),\mu_n)_{n\in\omega}$ of classes of finite structures equipped with probability measures, and hence is pseudofinite for a good probabilistic reason, but different sentences require different sequences.

Countably categorical pseudofinite theories do not have the strict order property. Since I am not aware of a reference for this folklore result, I will give a proof here:

\begin{proposition}\label{prop:nsop}
No countably categorical categorical pseudofinite theory has the strict order property.
\end{proposition}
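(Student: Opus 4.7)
The plan is to argue by contradiction: assume $T$ is countably categorical, pseudofinite, and has SOP, and then manufacture infinitely many pairwise inconsistent formulas, each consistent with $T$, to violate the Ryll--Nardzewski finite-type characterization of countable categoricity. After the standard reduction one may take SOP to be witnessed by a formula $\varphi(\bar x,\bar y)$ defining a strict partial order on $n$-tuples; write $\bar a < \bar b$ for $\varphi(\bar a,\bar b)$. Then $T$ contains the partial order axioms for $\varphi$ and, for every $k$, the sentence $\chi_k$ asserting the existence of a $<$-chain of length $k$.

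For each $k \geq 1$, define the height-$k$ formula
\[
H_k(\bar x) \;\equiv\; \bigl(\exists \bar y_1,\ldots,\bar y_{k-1}\; \bar y_1 < \cdots < \bar y_{k-1} < \bar x\bigr) \,\wedge\, \neg\bigl(\exists \bar y_1,\ldots,\bar y_k\; \bar y_1 < \cdots < \bar y_k < \bar x\bigr),
\]
which asserts that the longest $<$-chain ending at $\bar x$ has exactly $k$ elements. Pairwise inconsistency of $H_k \wedge H_{k'}$ for $k \neq k'$ is immediate by comparing the two conjuncts: the positive conjunct of the larger index directly contradicts the negative conjunct of the smaller one. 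The key step is to show $\exists \bar x\, H_k(\bar x) \in T$ for every $k$. Suppose not; then by completeness $\forall \bar x\, \neg H_k(\bar x) \in T$, and by pseudofiniteness the conjunction of this sentence with the partial order axioms and $\chi_k$ has a finite model $M$. In $M$, pick $\bar c$ of maximum height $H$ and a witnessing chain $\bar d_1 < \cdots < \bar d_H = \bar c$; a routine ``no shortcut at the top'' argument shows $h(\bar d_i) = i$ for all $i$, so every height from $1$ to $H$ is realized in $M$. Since the length-$k$ chain guaranteed by $\chi_k$ forces $H \geq k$, the element $\bar d_k$ has height exactly $k$, contradicting $M \models \forall \bar x\, \neg H_k(\bar x)$.

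Granted the key step, each $H_k$ extends to a complete $n$-type $p_k$ over $\emptyset$ in $T$, and pairwise inconsistency of the $H_k$ forces the $p_k$ to be pairwise distinct, so $T$ has infinitely many $n$-types over $\emptyset$, contradicting countable categoricity via Ryll--Nardzewski. The only place requiring real care is the key step, but the combinatorial lemma underlying it (the height function takes every value from $1$ up to its maximum in a finite strict partial order) is elementary; the preliminary reduction from a raw SOP witness to an actual strict partial order formula is also standard.
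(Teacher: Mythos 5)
Your proof is correct, but it takes a genuinely different route from the paper's. The paper also begins by reducing SOP to a definable partial order with infinite chains, but it then uses countable categoricity in the form ``automorphism-invariant properties are definable'' to obtain a formula $\varphi(x)$ expressing the (not directly first-order) property ``there is an infinite increasing chain above $x$''; the single sentence asserting that $\varphi$ is realized and that every realization has a $<$-greater realization is then in $T$ and visibly has no finite model. You instead work only with honest first-order formulas, the height formulas $H_k$: pseudofiniteness is used not to refute one sentence but to force each $H_k$ to be consistent with $T$ (via the elementary fact that the height function of a finite strict poset takes every value up to its maximum), and countable categoricity enters only at the very end through the Ryll--Nardzewski count of $n$-types. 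The paper's approach buys an explicit sentence of $T$ with no finite model, a slightly sharper statement; yours buys a more self-contained argument that avoids the ``invariant implies definable'' step and isolates the use of pseudofiniteness in a clean combinatorial lemma about finite posets. The only points deserving an extra sentence in a final write-up are the standard reduction from a raw SOP witness to a definable strict partial order on $n$-tuples (pass to the strict part of the definable preorder $\bar y \leq \bar y'$ iff $\forall \bar x\,(\varphi(\bar x,\bar y)\rightarrow\varphi(\bar x,\bar y'))$) and the degenerate case $k=1$ of $H_k$, both of which are routine.
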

\begin{proof}
If $T$ has the strict order property, then it interprets a partial order with infinite chains. So it suffices to show that no countably categorical partial order $(P,<)$ with infinite chains is pseudofinite.

By compactness, we can find an infinite increasing chain $\{a_i \mid i \in \omega\}$ with $P\models a_i < a_j$ if and only if $i<j$. In a countably categorical theory, automorphism-invariant properties are definable, so there is a formula $\varphi(x)$, with $\varphi(x)\in \mathrm{tp}(a_i)$ for all $i$, such that $P\models \varphi(b)$ if and only if there is an infinite increasing chain above $b$.

Now $P\models \E{x} \varphi(x) \land \A{x} (\varphi(x)\rightarrow \E{y} (x < y \land \varphi(y)))$. But in any partial order, this sentence implies the existence of an infinite increasing chain of elements satisfying $\varphi(x)$, so its conjunction with the partial order axioms has no finite model.
\end{proof}

In \cite{DS}, D\v{z}amonja and Shelah introduced the property SOP$_1$. It is the first in a linearly ordered hierarchy of combinatorial properties called SOP$_n$ (for $n$-Strong Order Property), which were originally defined by Shelah for $n\geq 3$ in \cite{Shelah500}. A theory has NSOP$_n$ if it does Not have the $n$-Strong Order Property. As usual in model theory, the named properties are bad: theories with NSOP$_n$ are tamer than theories with SOP$_n$. These properties lie strictly between non-simplicity and the Strict Order Property (SOP): \[\text{simple} \implies \mathrm{NSOP}_1 \implies \dots \implies \mathrm{NSOP}_n \implies \dots \implies \mathrm{NSOP}.\] It is worth noting that SOP$_2$ also goes by the name TP$_1$ (the Tree Property of the first kind, see~\cite{KimKim} for a discussion), and every theory which is known to have SOP$_1$ also has SOP$_3$. So it is possible that $\mathrm{NSOP}_1 \overset{?}{=} \mathrm{NSOP}_2 = \mathrm{NTP}_1 \overset{?}{=} \mathrm{NSOP}_3$. The generic triangle-free graph has SOP$_3$ but NSOP$_4$ \cite{Shelah500}. 

Chernikov and Ramsey \cite{CR} gave a independence relation criterion for NSOP$_1$ and used it to show that $\Tfeq$ has NSOP$_1$. The theory $\TCPZ$ was not considered in \cite{CR}, but the methods there also suffice to show that $\TCPZ$ has NSOP$_1$ (see Corollary~\ref{cor:CPZ}). On the other hand, almost nothing is known about pseudofiniteness of countably categorical theories in the region between SOP$_1$ and the strict order property. While acknowledging that we have a paucity of other examples, I think it is reasonable to update the outrageous conjecture of Kim and Pillay in the following way:

\begin{conjecture}\label{conj:outrageous}
Every pseudofinite countably categorical theory has NSOP$_1$. 
\end{conjecture}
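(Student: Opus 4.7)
Since the statement is posed as a conjecture rather than a theorem, what follows is an attack strategy rather than a proof. The natural route is contrapositive: assume $T$ is countably categorical with SOP$_1$, and aim to produce a sentence in $T$ with no finite model, in direct analogy with the proof of Proposition~\ref{prop:nsop}.

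First, I would fix a witness to SOP$_1$: a formula $\vphi(x,y)$ and parameters $(a_\eta)_{\eta\in 2^{<\omega}}$ such that $\{\vphi(x,a_{\eta\restriction n})\}_{n\in\omega}$ is consistent along every branch $\eta\in 2^\omega$, while $\{\vphi(x,a_{\eta\frown 0}),\vphi(x,a_\nu)\}$ is inconsistent whenever $\eta\frown 1 \trianglelefteq \nu$. After extracting a tree-indiscernible sequence one may assume the configuration is as homogeneous as possible. The leverage provided by countable categoricity is the Ryll-Nardzewski theorem: every automorphism-invariant family of tuples is definable. So one should try to isolate a formula $\psi_k(\bar x)$ defining ``tuples realising the SOP$_1$ pattern up to depth $k$'' and record, as a true first-order sentence of $T$, an extension property asserting that each such $\psi_k$-configuration can be extended by one further level.

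The second step is the contradiction. In the SOP case of Proposition~\ref{prop:nsop} the definable property ``there is an infinite chain above $x$'' could be closed up under a single sentence to rule out all finite models. The naive analogue here, ``there is an SOP$_1$-tree of depth $\omega$ below $\bar x$,'' is not first-order because branch consistency is not preserved under compactness. A more promising route is the Chernikov--Ramsey criterion from \cite{CR}: NSOP$_1$ is equivalent to the existence of an $\mathrm{Aut}$-invariant ternary relation on small sets satisfying symmetry, base monotonicity, and an independence theorem over models. One would try to manufacture such a relation from the counting (Keisler) measure on a pseudofinite approximation of $T$, reading off approximate independence and approximate amalgamation from the concentration of counting probabilities in the finite models and then passing to the ultraproduct.

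The main obstacle, and the reason this remains open, is that SOP$_1$ --- unlike SOP --- is not visible as an interpretable combinatorial configuration inside $M$: the inconsistency tree lives in the type space and does not descend to any concretely definable family, so the clean Ryll-Nardzewski obstruction of Proposition~\ref{prop:nsop} has no direct analogue. The measure-theoretic route is more flexible but runs into the familiar difficulty that a counting measure easily supplies invariance and symmetry but not a nontrivial amalgamation/independence theorem; separating ``NSOP$_1$-strength amalgamation'' from full ``simple amalgamation'' at the level of the Keisler measure, without effectively reproving the original Kim--Pillay conjecture, is in my view the crux and the reason the conjecture is stated rather than proved here.
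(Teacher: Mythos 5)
The statement you were asked to prove is Conjecture~\ref{conj:outrageous}, which the paper states as an open problem and does not prove: the surrounding text explicitly says that ``almost nothing is known about pseudofiniteness of countably categorical theories in the region between SOP$_1$ and the strict order property,'' and the conjecture is offered as an update of the Kim--Pillay conjecture rather than as a result. So there is no proof in the paper to compare yours against, and you were right to present a strategy rather than claim a proof. Your assessment of the situation matches the paper's.

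On the substance of the strategy: your diagnosis of why the argument for Proposition~\ref{prop:nsop} does not transfer is accurate and is the essential point. There, SOP yields an \emph{interpretable} partial order with infinite chains inside the model, and Ryll-Nardzewski converts the automorphism-invariant property ``there is an infinite increasing chain above $b$'' into a definable formula $\vphi(x)$, whence a single sentence with no finite models. An SOP$_1$ witness, by contrast, is a statement about consistency and inconsistency of families of types indexed by a tree; it does not descend to a definable configuration on tuples of the countable model, so no analogous sentence is available. Your second route --- extracting an independence relation from counting measures in a pseudofinite structure and testing it against the Chernikov--Ramsey criterion --- is a sensible direction, though note that the version of that criterion actually quoted in the paper (Theorem~\ref{thm:CR}) is the coheir $3$-amalgamation characterization over models, which is what the paper uses to prove Corollary~\ref{cor:CPZ}; your phrasing in terms of an abstract invariant ternary relation with symmetry and an independence theorem is the Kim--Pillay-style variant. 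Either way, your identification of the crux --- that counting measures readily give invariance and symmetry but not the amalgamation clause, and that obtaining it would come close to reproving the original (false for $\Tfeq$ and $\TCPZ$) simplicity conjecture unless one can isolate a strictly weaker NSOP$_1$-strength amalgamation --- is a fair account of why the conjecture remains open.
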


In Section~\ref{sec:prelim}, I review the relevant background on \Fraisse theory. I introduce disjoint $n$-amalgamation in Section~\ref{sec:defs} and prove Theorem~\ref{thm:main} in Section~\ref{sec:pseudofinite}. Section~\ref{sec:notions} contains some context about the role of $n$-amalgamation properties in model theory, as well as an explanation of how Theorem~\ref{thm:main} generalizes and unifies previous work. In Section~\ref{sec:eqrel}, I introduce the notion of a filtered \Fraisse class and give the applications to generic theories of equivalence relations ($\Tfeq$ and $\TCPZ$), along with a negative result, Proposition~\ref{prop:nohenson}, showing that this method cannot be used to show that the generic triangle-free graph is pseudofinite.

\bigskip

\textbf{Acknowledgements:} I would like to thank Tom Scanlon for his support and for many helpful discussions. Nick Ramsey also had a great influence on this paper: he told me about the theory $\TCPZ$, suggested Conjecture~\ref{conj:outrageous}, and, after some effort, convinced me that SOP$_1$ is a natural dividing line.

\section{Preliminaries}\label{sec:prelim}

In this section, I give a brief review of \Fraisse theory. The ``canonical language" described in Definition~\ref{def:canonical} provides the bridge to general countably categorical theories. For proofs, see \cite[Sections 2.6--8]{Cameron} or \cite[Section 7.1]{Hodges}.

Let $L$ be a relational language (not necessarily finite), and let $K$ be a class of finite $L$-structures which is closed under isomorphism. 
\begin{itemize}
\item $K$ has the \emph{hereditary property} if it is closed under substructure.
\item $K$ has the \emph{joint embedding property} if for all $A,B\in K$, there exists $C\in K$ and embeddings $A\hookrightarrow C$ and $B\hookrightarrow C$.
\item $K$ has the \emph{amalgamation property} (or \emph{$2$-amalgamation}) if for all $A,B,C\in K$ and embeddings $f\colon A\hookrightarrow B$ and $g\colon A\hookrightarrow C$, there exists $D\in K$ and embeddings $f'\colon B\hookrightarrow D$ and $g'\colon C\hookrightarrow D$ such that $f'\circ f = g'\circ g$. 
\item $K$ has the \emph{disjoint amalgamation property} (or \emph{disjoint $2$-amalgamation}) if, in the definition of the amalgamation property, the images of $B$ and $C$ in $D$ can additionally be taken to be disjoint over the image of $A$ in $D$: $(f'\circ f)[A] = (g'\circ g)[A] = f'[B] \cap g'[C]$. 
\item $K$ is a \emph{weak \Fraisse class} if it is countable up to isomorphism and has the hereditary property, the joint embedding property, and the amalgamation property.
\item $K$ is a \emph{\Fraisse class} if it is a weak \Fraisse class and additionally $K$ contains only finitely many structures of size $n$ up to isomorphism for all $n\in\omega$.
\end{itemize}

\begin{remark}\label{rem:terminology}
What I call a weak \Fraisse class here is often simply called a \Fraisse class. However, as we are only interested in \Fraisse classes with countably categorical generic theory, it is convenient to include the finiteness condition in the definition. Note that in a finite relational language, the notions coincide. In many sources the disjoint amalgamation property is called the strong amalgamation property. 
\end{remark}

\begin{definition}\label{def:ctbledefs}
Let $M$ be a countable $L$-structure.
\begin{itemize}
\item The \emph{age} of $M$ is the class of all finite structures which embed in $M$.
\item $M$ is \emph{ultrahomogeneous} if every isomorphism between finite substructures of $M$ extends to an automorphism of $M$.
\item $M$ has \emph{trivial acl} if $\mathrm{acl}(A) = A$ for all $A\subseteq M$.
\end{itemize}
\end{definition}

\begin{thmdef}\label{thmdef:fraisse}
$K$ is a weak \Fraisse class if and only if there is a countable ultrahomogeneous structure $M_K$ with age $K$. In this case, $M_K$ is unique up to isomorphism and is called the \emph{\Fraisse limit} of $K$. We call $T_K = \mathrm{Th}(M_K)$ the \emph{generic theory} of $K$. $K$ is a (strong) \Fraisse class if and only if $T_K$ is countably categorical. In this case, $T_K$ has quantifier elimination. A \Fraisse class $K$ has the disjoint amalgamation property if and only if $M_K$ has trivial acl. 
\end{thmdef}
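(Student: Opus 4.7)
The statement bundles four classical results, and my plan is to address them in turn. For the existence and uniqueness of a countable ultrahomogeneous $M_K$ with age $K$, the nontrivial direction is the standard \Fraisse construction: I would dovetail an enumeration of all one-point embedding requirements (each iso-type of $K$ must appear as a substructure) and all amalgamation requirements (every $A \subseteq B$ with $A$ already built and $B \in K$ must be resolved), and inductively produce a chain $M_0 \hookrightarrow M_1 \hookrightarrow \cdots$ in $K$ using JEP at the base step and AP at each successor. Setting $M_K = \bigcup_n M_n$, every element of $K$ embeds by construction, and any partial isomorphism between finite substructures of $M_K$ extends one element at a time by applying the amalgamation axiom inside $M_K$, which gives ultrahomogeneity. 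Conversely, ultrahomogeneity of a countable structure $M$ with age $K$ forces HP, JEP, and AP on $K$; uniqueness of $M_K$ is the usual back-and-forth between two candidate limits.

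For countable categoricity and QE, I would invoke the Ryll--Nardzewski theorem. If $K$ is a strong \Fraisse class, there are finitely many iso-types of $n$-element structures in $K$; ultrahomogeneity in a relational language then forces the $\text{Aut}(M_K)$-orbit of an $n$-tuple to be determined by the iso-type of the substructure it generates, so there are finitely many orbits on $n$-tuples and hence finitely many complete $n$-types over $\emptyset$, giving $\aleph_0$-categoricity. Conversely, $\aleph_0$-categoricity supplies finitely many orbits, hence finitely many iso-types of $n$-element substructures in $K$. Quantifier elimination falls out of ultrahomogeneity: in $M_K$, two tuples with the same quantifier-free type generate isomorphic substructures and are therefore conjugate by an automorphism, so realize the same complete type, which together with $\aleph_0$-saturation of $M_K$ is the standard criterion for QE in $T_K$.

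Finally, for the DAP--trivial-acl equivalence: if $K$ has disjoint amalgamation, then given finite $A \subseteq M_K$ and $b \in M_K \setminus A$, I form the disjoint amalgam of two copies of $A\cup\{b\}$ over $A$ inside $K$ and embed it into $M_K$; this produces $b' \neq b$ with the same quantifier-free (hence, by ultrahomogeneity, complete) type as $b$ over $A$, and iterating shows $\text{tp}(b/A)$ has infinitely many realizations, so $b \notin \text{acl}(A)$. Conversely, given embeddings $f\colon A \hookrightarrow B$ and $g\colon A \hookrightarrow C$ with $B, C \in K$, I embed $C$ into $M_K$ and extend $g \circ f^{-1}\!\restriction\!f(A)$ to an embedding of $B$ one new element at a time, at each stage choosing an image that realizes the correct type over the portion already built but lies outside the finite set $g(C)$; trivial acl supplies infinitely many valid choices at each step, so the construction always succeeds and yields a disjoint amalgam inside $M_K$, which can then be taken as $D \in K$. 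The main obstacle, mild as it is, lies in the bookkeeping of the \Fraisse construction itself — arranging the enumeration so that every partial isomorphism of the eventual $M_K$ is extended at every point — since this is what upgrades step-by-step closure under amalgamation to the full ultrahomogeneity required by the limit characterization.
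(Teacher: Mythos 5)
Your outline is correct: the paper does not prove this Theorem/Definition but defers to the standard references (Cameron; Hodges, Section 7.1), and your sketch is exactly the classical argument found there --- the dovetailed chain construction with back-and-forth for existence, uniqueness, and ultrahomogeneity; Ryll--Nardzewski plus the orbit/iso-type correspondence for countable categoricity and quantifier elimination; and the two-copies-over-$A$ amalgam versus the ``extend while avoiding a finite set'' argument for the equivalence of disjoint amalgamation with trivial acl. No gaps worth flagging beyond the bookkeeping you already acknowledge.
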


Given a \Fraisse class $K$ and $n\in\omega$, I will write $K(n)$ for the (finite) set of structures in $K$ with domain $[n] = \{1,\dots,n\}$. Note that I include the empty structure in the case $n = 0$. $K(n)$ contains $(n!/|\mathrm{Aut}(A)|)$-many isomorphic copies of every structure $A$ in $K$. It will be convenient to identify these structures with their quantifier-free $n$-types: for $A\in K(n)$, $$\mathrm{qftp}(A) = \{\varphi(x_1,\dots,x_n)\mid \varphi\text{ is quantifier-free, and }A\models \varphi(1,\dots,n)\}.$$
Since $T_K$ has quantifier elimination, we can further identify the structures in $K(n)$ with the set of first-order $n$-types over the empty set relative to $T_K$ which are non-redundant, in the sense that they contain the formulas $\{x_i\neq x_j\mid i\neq j\}$.

Now each $n$-type relative to $T_K$ is isolated by a quantifier-free formula. In other words, each structure $A\in K(n)$ is distinguished from the others by a single quantifier-free formula $\theta_A(x_1,\dots,x_n)$. In the case that $L$ is finite, we may take $\theta_A$ to be the conjunction of the atomic diagram of $A$. If $L$ is infinite, a large enough part of the atomic diagram suffices.

\begin{theorem}\label{thm:axiomatization}
In this notation, the generic theory $T_K$ can be explicitly axiomatized as follows:
\begin{itemize}
\item The universal theory of $K$: This amounts to the sentences, for $n\in\omega$, $$\A{x_1,\dots,x_n} \left(\left(\bigwedge_{i\neq j} x_i\neq x_j\right) \rightarrow \left(\bigvee_{A\in K(n)} \theta_A(\overline{x})\right)\right),$$
together with, if $L$ is infinite, the information about how $\theta_A$ determines the other quantifier-free formulas. That is, for each $n$, $A\in K(n)$, and quantifier-free formula $\varphi(\overline{x})\in \mathrm{qftp}(A)$, $$\A{\overline{x}} \left(\theta_A(\overline{x})\rightarrow \varphi(\overline{x})\right).$$
\item One-point extension axioms: For all $A\in K(n)$ and $B\in K(n+1)$, we say that $(A,B)$ is a one-point extension if $A$ is the induced substructure of $B$ with domain $[n]$. Given a one-point extension $(A,B)$, we have the axiom $$\A{\overline{x}} \E{y} \left(\theta_A(\overline{x}) \rightarrow \theta_B(\overline{x},y)\right).$$
\end{itemize}
\end{theorem}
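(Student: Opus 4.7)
The plan is to verify that $M_K$ models the listed axioms, then show any countable model of them is isomorphic to $M_K$ by a back-and-forth argument, which together with countable categoricity of $T_K$ yields the axiomatization.

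First I would check that $M_K$ satisfies the axioms. The universal theory of $K$ holds because $\text{age}(M_K) = K$, so any tuple of distinct elements induces a substructure isomorphic to some $A\in K(n)$; the implications $\theta_A\to\varphi$ are then immediate from the definition of $\theta_A$ as a quantifier-free formula isolating the quantifier-free type of $A$. For a one-point extension $(A,B)$, a tuple $\overline{a}\in M_K$ realizing $\theta_A$ generates a substructure isomorphic to $A$, and the extension property characterizing the \Fraisse limit then yields $b\in M_K$ with $\overline{a}b$ realizing $\theta_B$.

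Conversely, let $N$ be any countable model of the axioms. The universal axioms give $\text{age}(N)\subseteq K$. To see $\text{age}(N)=K$, observe that the hereditary property of $K$ makes every initial segment $B|_{[i]}$ of $B\in K(n)$ itself in $K$, so iteratively applying the one-point extension axioms starting from the empty substructure of $N$ embeds $B$ into $N$. Ultrahomogeneity of $N$ follows by a standard back-and-forth: given a partial isomorphism between finite substructures and a new element on one side, inducing a substructure isomorphic to some $B\in K(n+1)$ extending the matched $A\in K(n)$, the $(A,B)$-axiom supplies the required element on the other side. By the uniqueness clause of \Fraisse's theorem, $N\cong M_K$.

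Finally, since $T_K$ is complete and countably categorical, every countable model of the axioms is isomorphic to $M_K$; by downward L\"owenheim-Skolem, every model of the axioms elementarily contains such a countable model, hence satisfies $T_K$. I do not expect a genuine obstacle here---the argument is a direct unwinding of the characterization of the \Fraisse limit---but in the infinite-language case one must be careful to invoke the implications $\theta_A\to\varphi$ so that matching tuples on a single formula $\theta_A$ actually certifies full quantifier-free equivalence, rather than mere agreement on the finitely many symbols occurring in $\theta_A$.
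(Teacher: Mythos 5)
Your proof is correct and is the standard argument: the paper gives no proof of this theorem, deferring to the references cited at the start of Section 2 (Cameron, Hodges), where exactly this strategy appears --- verify the axioms in $M_K$, show any countable model has age $K$ and is ultrahomogeneous via the extension axioms, invoke uniqueness of the \Fraisse limit, and finish with L\"owenheim--Skolem. Your closing caveat about the infinite-language case (needing the axioms $\theta_A\rightarrow\varphi$ to upgrade agreement on $\theta_A$ to full quantifier-free equivalence) is precisely the right point of care.
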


\begin{definition}\label{def:TKn}
$T_{K,n}$ is the (incomplete) theory axiomatized by 
\begin{enumerate}
\item The sentences in the universal theory of $K$ in at most $n$ universal quantifiers.
\item All one-point extension axioms for $K$ (with no restriction on the sizes of $A$ and $B$).
\end{enumerate}
\end{definition}

A model of $T_{K,n}$ satisfies all the one-point extension axioms over substructures satisfying one of the formulas $\theta_A$ for $A\in K$, but its age need only agree with $K$ up to substructures of size at most $n$. We will see in Theorem~\ref{thm:main} below that basic disjoint amalgamation up to level $n$ implies pseudofiniteness of $T_{K,n}$. 

It will be useful to consider expansions of $T_K$ at the level of the \Fraisse class $K$.

\begin{definition}\label{def:expansion}
Let $K$ and $K'$ be \Fraisse classes in languages $L$ and $L'$, respectively, such that $L\subseteq L'$. We say that $K'$ is a \emph{\Fraisse expansion} of $K$ if 
\begin{enumerate}
\item $K = \{A\restriction L\mid A\in K'\}$
\item For all one-point extensions $(A,B)$ in $K$, and every expansion of $A$ to a structure $A'$ in $K'$, there is an expansion of $B$ to a structure $B'$ in $K'$ such that $(A',B')$ is a one-point extension in $K'$.
\end{enumerate}
\end{definition}

\begin{theorem}\label{thm:expansion}
$K'$ is a \Fraisse expansion of $K$ if and only if the \Fraisse limit $M_{K'}$ of $K'$ is an expansion of the \Fraisse limit $M_K$ of $K$.
\end{theorem}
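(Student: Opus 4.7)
The plan is to use the axiomatization of $T_K$ from Theorem~\ref{thm:axiomatization} to reduce both directions to verifying the universal theory and the one-point extension axioms, transported between $M_K$ and $M_{K'}$ via reduct.

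For the forward direction, assume $K'$ is a \Fraisse expansion of $K$. I would show that $N := M_{K'}\restriction L$ is (isomorphic to) $M_K$ by proving $N\models T_K$, since $T_K$ is complete with countable unique model $M_K$. First, the age of $N$ consists of $L$-reducts of finite substructures of $M_{K'}$, which by condition (1) in the definition of \Fraisse expansion is exactly $K$; this gives the universal part of $T_K$. For a one-point extension axiom coming from $(A,B)$ in $K$, suppose $\overline{a}\in N$ satisfies $\theta_A(\overline{a})$. The $L'$-substructure on $\overline{a}$ in $M_{K'}$ is some $A'\in K'$ expanding $A$. By condition (2), there is an expansion $B'$ of $B$ in $K'$ such that $(A',B')$ is a one-point extension. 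Since $M_{K'}$ is the \Fraisse limit of $K'$, ultrahomogeneity produces some $b\in M_{K'}$ realizing $B'$ over $\overline{a}$, and its $L$-reduct witnesses $\theta_B(\overline{a},b)$ in $N$.

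For the backward direction, assume $M_{K'}\restriction L \cong M_K$. Condition (1) is immediate: both sides equal the age of $M_K = M_{K'}\restriction L$, namely $L$-reducts of finite $L'$-substructures of $M_{K'}$. For condition (2), let $(A,B)$ be a one-point extension in $K$ and $A'\in K'$ an expansion of $A$. Since $A'$ is in the age of $M_{K'}$, pick an embedding placing $A'$ on a tuple $\overline{a}\subseteq M_{K'}$. Passing to $M_K = M_{K'}\restriction L$, the tuple $\overline{a}$ realizes $\theta_A$, so by the one-point extension axiom in $T_K$ there is $b\in M_K$ with $(\overline{a},b)$ realizing $\theta_B$. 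Let $B'$ be the $L'$-substructure of $M_{K'}$ on $\overline{a}\cup\{b\}$; then $B'\in K'$, its reduct is $B$, and its restriction to $\overline{a}$ is the already-given $A'$, so $(A',B')$ is the required one-point extension.

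The only real subtlety is making sure the reduct $M_{K'}\restriction L$ really is $M_K$ rather than merely a model of the universal theory of $K$; that is handled by the one-point extension axioms and countable categoricity of $T_K$, and the key observation is that ultrahomogeneity of $M_{K'}$ as an $L'$-structure is enough to produce the needed one-point extensions in the reduct, even though $M_{K'}\restriction L$ need not be ultrahomogeneous as an $L$-structure a priori. No step appears to pose a serious obstacle.
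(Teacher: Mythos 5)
Your proof is correct and follows essentially the same route as the paper's: both directions are reduced to checking the universal part and the one-point extension axioms of $T_K$ (via Theorem~\ref{thm:axiomatization} and countable categoricity), transporting extensions between $M_K$ and $M_{K'}$ through the reduct exactly as the paper does. Your appeal to ultrahomogeneity of $M_{K'}$ to realize $B'$ over $\overline{a}$ is just the extension property of the \Fraisse limit, which is the same fact the paper invokes as ``the appropriate extension axiom is true of $M_{K'}$.''
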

\begin{proof}
Suppose that $M_{K'}\restriction L = M_K$. Then $K = \mathrm{Age}(M_K) = \{A\restriction L\mid A\in \mathrm{Age}(M_{K'})\}$, and $\mathrm{Age}(M_{K'}) = K'$. Given a one-point extension $(A,B)$ and an expansion $A'$ of $A$, we can find a substructure of $M_{K'}$ isomorphic to $A'$. In the reduct, this substructure is isomorphic to $A$, and, since the one-point extension axiom for $(A,B)$ is true of $M_K$, it extends to a copy of $B$. We can take $B'$ to be the $L'$-structure on this subset of $M_{K'}$.

Conversely, to show that $M_{K'}$ is an expansion of $M_K$, by countable categoricity it suffices to show that $M_{K'}\restriction L$ satisfies the theory $T_K$. It clearly satisfies the universal part, since $\textrm{Age}(M_{K'}\restriction L) = \{A\restriction L\mid A\in K'\} = K$. For the extension axioms, suppose $(A,B)$ is a one-point extension, and we have a copy of $A$ in $M_{K'}\restriction L$. Let $A'$ be the $L'$-structure on this subset of $M_{K'}$. Since $K'$ is a \Fraisse expansion of $K$, we can find an expansion $B'$ of $B$ in $K'$ such that $(A',B')$ is a one-point extension, and, since the one-point extension axiom for $(A',B')$ is true of $M_{K'}$, our copy of $A'$ extends to a copy of $B'$. Hence, in the reduct, our copy of $A$ extends to a copy of $B$.
\end{proof}

\begin{definition}\label{def:canonical}
Let $T$ be any countably categorical $L$-theory, and let $M$ be its unique countable model. The \emph{canonical language} for $T$ is the language $L'$ with one $n$-ary relation symbol $R_p$ for each $n$-type $p(\overline{x})$ realized in $M$. 
\end{definition}

We make $M$ into an $L'$-structure $M'$ in the natural way by setting $M'\models R_p(\overline{a})$ if and only if $\overline{a}$ realizes $p(\overline{x})$ in $M$. Let $T' = \mathrm{Th}_{L'}(M')$. Then $T$ and $T'$ are interdefinable theories, $M'$ is ultrahomogeneous, and hence is the \Fraisse limit of its age $K$, and $K$ has the disjoint amalgamation property if and only if $M$ has trivial acl. Note that for each $A\in K(n)$, we may take the isolating formula $\theta_A$ to be one of the basic $n$-ary relation symbols $R_p$.

\section{Disjoint \texorpdfstring{$n$}{n}-amalgamation}\label{sec:namalg}

\subsection{Definitions}\label{sec:defs}

To fix notation, $[n] = \{1,\dots,n\}$, $\mathcal{P}([n])$ is the powerset of $[n]$, and $\mathcal{P}^-([n])$ is the set of all proper subsets of $[n]$. A family $\mathcal{F}\subseteq \mathcal{P}([n])$ of subsets of $[n]$ is \emph{downwards closed} if $S'\in\mathcal{F}$ whenever $S'\subseteq S$ and $S\in \mathcal{F}$. 

Let $T$ be a theory and $A$ a set of parameters in a model of $T$. We say that a type $p(\overline{x})$ over $A$ in the variables $\{x_i\mid i\in I\}$ is \emph{non-redundant} if it contains the formulas $\{x_i\neq x_j\mid i\neq j\in I\}$ and $\{x_i\neq a\mid i\in I, a\in A\}$. Given a downwards closed family of subsets $\mathcal{F}\subseteq \mathcal{P}([n])$, and variables $\overline{x}_1,\dots,\overline{x}_n$, a \emph{coherent $\mathcal{F}$-family of types over $A$} is a set $\{p_{S}\mid S\in\mathcal{F}\}$ such that each $p_{S}$ is a non-redundant type over $A$ in the variables $\overline{x}_S = \{\overline{x}_i\mid i\in S\}$, and $p_{S'}\subseteq p_S$ when $S'\subseteq S$. Here each $\overline{x}_i$ is a tuple of variables, possible empty or infinite, but such that $\overline{x}_i$ is disjoint from $\overline{x}_j$ when $i\neq j$.

For $n\geq 2$, a \emph{disjoint $n$-amalgamation problem} is a coherent $\mathcal{P}^-([n])$-family of types over a set $A$. A \emph{basic} disjoint $n$-amalgamation problem is a disjoint $n$-amalgamation problem over the empty set in the singleton variables $x_1,\dots,x_n$. 

A \emph{solution} to a (basic) disjoint $n$-amalgamation problem is an extension of the coherent $\mathcal{P}^-([n])$-family of types to a coherent $\mathcal{P}([n])$-family of types; that is, a non-redundant type $p_{[n]}$ such that $p_S\subseteq p_{[n]}$ for all $S$. We say $T$ has \emph{(basic) disjoint $n$-amalgamation} if every (basic) $n$-amalgamation problem has a solution.

If we replace $\mathcal{P}^-([n])$ by another downwards closed family of subsets $\mathcal{F}$ in the definitions above, we call the amalgamation problem \emph{partial}. 

First, some remarks on the definitions:

\begin{remark}\label{rem:empty}
In any coherent $\mathcal{F}$-family of types over $A$, the type $p_\varnothing$ is a $0$-type in the empty tuple of variables, which simply specifies the elementary diagram of the parameters $A$.
\end{remark}

\begin{remark}\label{rem:toplevel}
To specify a disjoint $n$-amalgamation problem, it would be sufficient to give the types $p_S$ for all $S$ with $|S| = n - 1$ and check that they agree on intersections, in the sense that $p_S\restriction \overline{x}_{S\cap S'} = p_S'\restriction \overline{x}_{S\cap S'}$ for all $S$ and $S'$. However, it is sometimes notationally convenient to keep the intermediate stages around.
\end{remark}

\begin{remark}\label{rem:fraisseamalg}
A \Fraisse class $K$ has the disjoint amalgamation property if and only if $T_K$ has disjoint $2$-amalgamation. Indeed, given $A,B,C\in K$ and embeddings $f\colon A\hookrightarrow B$ and $g\colon A\hookrightarrow C$, we take $A$ to be the base set of parameters, so $p_\varnothing = \mathrm{qftp}(A)$, and we set $p_{\{1\}}(\overline{x}_1) = \mathrm{qftp}((B\setminus A)/A)$ and $p_{\{2\}}(\overline{x}_2) = \mathrm{qftp}((C\setminus A)/A)$, identifying $A$ with its images in $B$ and $C$ under $f$ and $g$. By quantifier elimination, these quantifier-free types determine complete types relative to $T_K$. A solution to this disjoint $2$-amalgamation problem is the same as a structure $D$ in $K$ into which $B$ and $C$ embed disjointly over the image of $A$.
\end{remark}

\begin{remark}\label{rem:fraissenotation}
Given a \Fraisse class $K$, recall that we have identified $K(n)$, the structures in $K$ with domain $[n]$, with the set of non-redundant quantifier-free $n$-types relative to $T_K$. A basic disjoint $n$-amalgamation problem relative to $T_K$ is a coherent $\mathcal{P}^-([n])$-family of quantifier-free types $P = \{p_S\mid S\in\mathcal{P}^-([n])\}$ in the variables $x_1,\dots,x_{n}$, where each type $p_S$ corresponds to a structure $A_S$ in $K$ of size $|S|$. We write $K(n,P) = \{p_{[n]}(x_1,\dots,x_{n})\in K(n)\mid p_S\subseteq p_{[n]}\text{ for all }S\in \mathcal{P}^-([n])\}$ for the set of solutions to the amalgamation problem P, each of which corresponds to a structure $A_{[n]}$ in $K$ of size $n$ which contains all the $A_S$ as substructures. To say that $T_K$ has basic disjoint $n$-amalgamation is to say that $K(n,P)$ is non-empty for all $P$.
\end{remark}

It will be useful to observe that disjoint amalgamation gives solutions to partial amalgamation problems as well.

\begin{lemma}\label{lem:partial}
Suppose that $T$ has (basic) disjoint $k$-amalgamation for all $2\leq k\leq n$. Then every partial (basic) disjoint $n$-amalgamation problem has a solution.
\end{lemma}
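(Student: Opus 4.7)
The plan is to induct on $|\mathcal{P}([n]) \setminus \mathcal{F}|$, filling in the missing types one at a time in order of increasing size. The base case $\mathcal{F} = \mathcal{P}([n])$ is immediate, since the given family is already a coherent $\mathcal{P}([n])$-family. For the inductive step, I would pick $S$ minimal in $\mathcal{P}([n]) \setminus \mathcal{F}$; by downward closure of $\mathcal{F}$, every proper subset of $S$ lies in $\mathcal{F}$, and no element of $\mathcal{F}$ properly contains $S$ (otherwise $S \in \mathcal{F}$). So once I produce a $p_S$ compatible with $\{p_{S'} \mid S' \subsetneq S\}$, the enlarged family indexed by $\mathcal{F} \cup \{S\}$ is still coherent and downwards closed, with one fewer missing subset, and the induction continues.

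The key step is the case $2 \leq |S| \leq n$. After relabeling $S$ as $[|S|]$, the subfamily $\{p_{S'} \mid S' \in \mathcal{P}^-(S)\}$ is exactly a (basic, in the basic setting) disjoint $|S|$-amalgamation problem over $A$ in the variables $\{\overline{x}_i \mid i \in S\}$, and the hypothesis of the lemma directly produces a solution $p_S$. The small cases $|S| \leq 1$ are handled by hand: if $S = \emptyset$ (forcing $\mathcal{F} = \emptyset$), I would take $p_\emptyset$ to be the elementary diagram of $A$, as in Remark~\ref{rem:empty}; if $S = \{i\}$, any non-redundant complete type in $\overline{x}_i$ over $A$ works, and such a type exists because (basic) disjoint $2$-amalgamation already forces $T$ to have arbitrarily large, hence infinite, models in which $\overline{x}_i$ can be realized by fresh distinct elements disjoint from $A$.

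The only conceptual content is that processing the missing sets bottom-up turns each new $p_S$ into the ``top'' of an $|S|$-amalgamation problem whose lower-dimensional data has already been committed at earlier stages. I do not expect a serious obstacle: the work is almost entirely bookkeeping, namely confirming that a minimal missing $S$ always exists, that its proper subsets really are all in $\mathcal{F}$, and that the enlargement step preserves coherence and downward closure. In this sense the lemma is a purely formal consequence of the $k$-amalgamation hypotheses for $2 \leq k \leq n$, obtained by triangulating the top cell of $\mathcal{P}([n])$ from below.
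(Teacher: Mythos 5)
Your proof is correct and follows essentially the same route as the paper's: both fill in the missing types from the bottom up, observing that a minimal missing set $S$ has all its proper subsets already assigned, so that $p_S$ can be produced by disjoint $|S|$-amalgamation (the paper processes all sets of a given size at once rather than one at a time, but this is cosmetic). Your check that no set of $\mathcal{F}$ properly contains $S$, so that adding $p_S$ cannot break coherence from above, is the only point requiring care, and you handle it correctly.
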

\begin{proof}
I will consider the general case. The same proof works in the basic case.

We are given a partial disjoint $n$-amalgamation problem over $A$ in variables $\overline{x}_1,\dots,\overline{x}_n$; that is, a coherent $\mathcal{F}$-family of types $\{p_S\mid S\in\mathcal{F}\}$, with $\mathcal{F}\subseteq \mathcal{P}^-([n])$ downwards closed. 

We build a solution to the partial disjoint $n$-amalgamation problem from the bottom up. By induction on $1\leq k\leq n$, I claim that we can extend this family to a coherent $\mathcal{F}_k$-family of types, where $\mathcal{F}_k = \mathcal{F}\cup \{S\subseteq [n]\mid |S|\leq k\}$. When $k = n$, we have a coherent $\mathcal{P}([n])$-family of types, as desired.

When $k = 1$, if there is any $i$ such that $i\notin S$ for all $S\in \mathcal{F}$, then the original $\mathcal{F}$-family of types says nothing about the variables $\overline{x}_i$. We add $\{i\}$ into $\mathcal{F}_1$ and choose any non-redundant type $p_{\{i\}}$ over $A$ in the variables $\overline{x}_i$. If $\varnothing\notin \mathcal{F}$ (which only happens if $\mathcal{F}$ is empty) we also add it into $\mathcal{F}_1$, along with the unique $0$-type $p_\varnothing$ containing the elementary diagram of $A$.

Given a coherent $\mathcal{F}_{k-1}$-family of types by induction, with $2\leq k \leq n$, we wish to extend to a coherent $\mathcal{F}_k$-family of types. If there is any set $S\subseteq [n]$ with $|S| = k$ such that $S\notin \mathcal{F}_{k-1}$, then all proper subsets of $S$ are in $\mathcal{F}_{k-1}$. Hence we have types $\{p_R \mid R\in \mathcal{P}^-(S)\}$ which form a coherent $\mathcal{P}^-(S)$-family. Using $k$-amalgamation, we can find a non-redundant type $p_S$ in the variables $\overline{x}_S$ extending the types $p_R$. Doing this for all such $S$ gives a coherent $\mathcal{F}_k$-family of types, as desired.
\end{proof}

Disjoint $n$-amalgamation is more general and seems more natural, but it is basic disjoint $n$-amalgamation which is relevant in the proof of Theorem~\ref{thm:main}. We are largely interested in theories with disjoint $n$-amalgamation for all $n$, and in this case the two notions agree.

\begin{proposition}\label{prop:equivalence}
$T$ has disjoint $n$-amalgamation for all $n$ if and only if $T$ has basic disjoint $n$-amalgamation for all $n$.
\end{proposition}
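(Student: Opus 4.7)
The forward direction is immediate from the definitions, since every basic disjoint $n$-amalgamation problem is a disjoint $n$-amalgamation problem. For the converse, the plan is to absorb parameters as variables and flatten variable tuples into singletons, converting an arbitrary disjoint $n$-amalgamation problem into a \emph{partial basic} $N$-amalgamation problem (for some possibly much larger $N$), which can then be solved using Lemma~\ref{lem:partial}.

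Given a disjoint $n$-amalgamation problem $\{p_S \mid S\in\mathcal{P}^-([n])\}$ over parameters $A$ in variable tuples $\overline{x}_1,\dots,\overline{x}_n$, I would first invoke compactness to reduce to the case where $A$ is finite and each $\overline{x}_i$ is a finite tuple: a solution $p_{[n]}$ exists iff $\bigcup_S p_S$ is consistent, and every finite subset mentions only finitely many parameters and variables. Trivial (empty) tuples can be discarded, so I would also assume each $\overline{x}_i$ is nonempty. Next I would flatten: enumerate $A = \{a_1,\dots,a_m\}$ and each $\overline{x}_i$ by a fresh disjoint index set $J_i$, and treat the resulting $N$ objects as singleton variables $z_1,\dots,z_N$, with $J_0 := [m]$ indexing the variables corresponding to $A$. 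Let $U_S = J_0\cup\bigcup_{i\in S} J_i$; since each $J_i$ with $i\in[n]$ is nonempty, $U_S\subsetneq[N]$ for every $S\in\mathcal{P}^-([n])$, so the downward closure $\mathcal{F}$ of $\{U_S \mid S\in\mathcal{P}^-([n])\}$ sits inside $\mathcal{P}^-([N])$. For $U\in\mathcal{F}$, with $S(U) = \{i\in[n] : U\cap J_i\neq\emptyset\}$ the minimal $S$ satisfying $U\subseteq U_S$, define $q_U$ to be the restriction of $p_{S(U)}$, viewed as a parameter-free type in the $z_v$ indexed by $U_{S(U)}$, to the variables indexed by $U$. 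Coherence of $\{q_U\}$ is inherited from coherence of $\{p_S\}$, so this is a partial basic $N$-amalgamation problem.

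Applying Lemma~\ref{lem:partial} in the basic case then produces a solution $q_{[N]}$. To translate back, I would realize $q_{[N]}$ by a tuple $(b_1,\dots,b_N)$ in a sufficiently saturated model; since $q_{[N]}$ extends $q_{J_0} = \mathrm{tp}(a_1,\dots,a_m)$, an automorphism sends $(b_1,\dots,b_m)$ to $(a_1,\dots,a_m)$, and the images of the remaining coordinates realize a type over $A$ that extends every $p_S$, providing the desired $p_{[n]}$. I expect the main obstacle to be almost entirely bookkeeping — verifying that $\mathcal{F}\subseteq\mathcal{P}^-([N])$ (where the nonempty-tuple reduction is used) and that the $q_U$ assemble into a coherent family — but neither check is deep once the flattening is set up correctly. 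The conceptual content is simply that parameters and non-singleton variables can be traded for additional singleton variables at a higher arity, so that any disjoint $n$-amalgamation problem can be regarded as (a partial version of) a basic disjoint $N$-amalgamation problem.
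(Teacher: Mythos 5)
Your proposal is correct and follows essentially the same route as the paper's proof: reduce by compactness to finite parameters and finite tuples, flatten parameters and tuple variables into $N$ singleton variables to obtain a partial basic $N$-amalgamation problem, and solve it via Lemma~\ref{lem:partial}. The extra bookkeeping you supply (the sets $U_S$, the check that $\mathcal{F}\subseteq\mathcal{P}^-([N])$, and the automorphism step for translating back) only makes explicit details the paper leaves implicit.
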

\begin{proof}
One direction is clear, since basic disjoint $n$-amalgamation is a special case of disjoint $n$-amalgamation.

In the other direction, note first that there is a solution to the disjoint $n$-amalgamation problem $\{p_S\mid S\in\mathcal{P}^-([n])\}$ if and only if the partial type $$\{x\neq x'\mid x,x'\text{ distinct}\}\cup \bigcup_{S\in\mathcal{P}^-([n])}p_S(\overline{x}_S)$$ is consistent (actually, we could omit the formulas asserting non-redundancy when $n > 2$). Hence, by compactness, we can reduce to the case that $A$ is finite and each tuple of variables $\overline{x}_i$ is finite.

Let $N = |A| + \sum_{i = 1}^n |\overline{x}_i|$, where $|\overline{x}_i|$ is the length of the tuple $\overline{x}_i$. Introduce variables $y_1,\dots, y_N$, where $y_1,\dots, y_{|A|}$ enumerate $A$ and the remaining variables relabel the $x$ variables. Now each type $p_S$ over $A$ determines a type in some subset of the $y$ variables, by replacing the parameters from $A$ and the $x$ variables by the appropriate $y$ variables. Closing downward under restriction to smaller sets of variables, we obtain a partial basic disjoint $N$-amalgamation problem over the empty set in the singleton variables $y_1,\dots, y_N$. By Lemma~\ref{lem:partial} and basic disjoint $N$-amalgamation, this partial amalgamation problem has a solution, a type $p_{[N]}(y_1,\dots,y_N)$ over the empty set. Once again replacing the $y$ variables with the original parameters from $A$ and $x$ variables, we obtain a type $p_{[n]}$ over $A$ which is a solution to the original $n$-amalgamation problem.
\end{proof}

\begin{example}
The class $\mathcal{G}_\triangle$ of triangle-free graphs has disjoint $2$-amalgamation: if $A$ embeds in $B$ and $C$, we can amalgamate $B$ and $C$ ``freely" over $A$ by not adding any new edge relations between $B$ and $C$. But it does not have disjoint $3$-amalgamation: the non-redundant $2$-types determined by $x_1Rx_2$, $x_2Rx_3$, and $x_1Rx_3$ cannot be amalgamated. 

Generalizing, let $K_n^k$ be the class of $n$-free $k$-hypergraphs: the language consists of a single $k$-ary relation $R(x_1,\dots,x_k)$, and the structures in $K_n^k$ are hypergraphs (so $R$ is symmetric and anti-reflexive) such that for every $n$-tuple $\overline{a}$ of distinct elements, there is some subtuple $\overline{b}$ of length $k$ such that $\lnot R(\overline{b})$ holds. Note that $\mathcal{G}_\triangle$ is $K_3^2$.

For $n>k$, $K_n^k$ satisfies basic disjoint $m$-amalgamation for $m<n$, but fails basic disjoint $n$-amalgamation, since the first forbidden configuration has size $n$. However, $K_n^k$ already fails disjoint $(k+1)$-amalgamation. Over a base set $A$ consisting of a complete hypergraph on $(n-k-1)$ vertices, the $k$-type over $A$ which describes, together with $A$, a complete hypergraph on $(n-1)$ vertices is consistent, but $(k+1)$ copies of it cannot be amalgamated. 
\end{example}

\begin{example}\label{ex:reducts}
There are countably categorical theories which do not have disjoint $n$-amalgamation for all $n$, but which admit countably categorical expansions with disjoint $n$-amalgamation for all $n$.

As a simple example, consider the theory of a single equivalence relation with $k$ infinite classes. Transitivity is a failure of disjoint $3$-amalgamation: the non-redundant $2$-types determined by $x_1Ex_2$, $x_2Ex_3$, and $\lnot x_1Ex_3$ cannot be amalgamated. But if we expand the language by adding $k$ new unary relations $C_1,\dots,C_k$ in such a way that each class is named by one of the $C_i$, the resulting theory has disjoint $n$-amalgamation for all $n$.

For a more interesting example, the random graph (which is easily seen to have disjoint $n$-amalgamation for all $n$) in its canonical language has a reduct to a ultrahomogeneous $3$-hypergraph, where the relation $R(a,b,c)$ holds if and only if there are an \emph{odd} number of the three possible edges between $a$, $b$, and $c$. This structure turns out to be ultrahomogeneous in the language $\{R\}$, and its age is the class of all finite $3$-hypergraphs with the property that on any four vertices $a$, $b$, $c$, and $d$, there are an \emph{even} number of the four possible $3$-edges. Hence this class fails to have disjoint $4$-amalgamation. For more information on this example, see \cite{MSurvey}, where it is called the homogeneous two-graph. More examples of this kind can be found in the literature on reducts of homogeneous structures, e.g.\ \cite{Thomas}.
\end{example}

\subsection{Pseudofiniteness}\label{sec:pseudofinite}

\begin{definition}
A theory $T$ is \emph{pseudofinite} if for every sentence $\varphi$ such that $T\models \varphi$, $\varphi$ has a finite model.
\end{definition}

Theorem~\ref{thm:main} below is stated in a fine-grained way: amalgamation just up to level $n$ gives pseudofiniteness of the theory $T_{K,n}$ (see Definition~\ref{def:TKn}). The proof involves a probabilistic construction of a structure of size $N$ for each $N$ ``from the bottom up". This is the same idea as in the proof of Lemma~\ref{lem:partial}, but there we could fix an arbitrary $k$-type extending a given coherent family of $l$-types for $l<k$. Here we introduce randomness by choosing an extension uniformly at random. 

The probabilistic calculation is essentially the same as the one used in the classical proofs of the zero-one laws for graphs and general $L$-structures (see \cite[Lemma 7.4.6]{Hodges}). The key point is that the amalgamation properties allow us to make all choices as independently as possible: the quantifier-free types assigned to subsets $A$ and $B$ of $[N]$ are independent when conditioned on the quantifier-free type assigned to $A\cap B$. It is this independence which makes the calculation go through.

Formally, we construct a probability measure on the space $L[N]$ of $L$-structures with domain $[N]$. Given a formula $\varphi(\overline{x})$ and a tuple $\overline{a}$ from $[N]$, we write $[\varphi(\overline{a})] = \{M\in L[N]\mid M\models \varphi(\overline{a})\}$. The space $L[N]$ is topologized by taking the instances of the atomic and negated atomic formulas $[(\lnot) R(\overline{a})]$ as subbasic open sets. Of course, if $L$ is finite, then $L[N]$ is a finite discrete space.

\begin{theorem}\label{thm:main}
Let $K$ be a \Fraisse class whose generic theory $T_K$ has basic disjoint $k$-amalgamation for all $2\leq k\leq n$. Then every sentence in $T_{K,n}$ has a finite model. If $T_K$ has basic disjoint $k$-amalgamation for all $k$, then every sentence in $T_K$ has a finite model in $K$.
\end{theorem}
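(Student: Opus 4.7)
The approach is probabilistic: for each $N$, I define a measure $\mu_N$ on $L[N]$ by building a random $L$-structure on $[N]$ inductively on the subset size, and show that for any $\varphi$ in the theory under consideration, $\mu_N(\{M : M\models\varphi\}) \to 1$, so $\varphi$ has finite models. At stage $k$, enumerate the $k$-element subsets $S \subseteq [N]$; the quantifier-free types already chosen on the proper subsets of $S$ form a basic disjoint $k$-amalgamation problem $P_S$, which by basic disjoint $k$-amalgamation has nonempty solution set $K(k, P_S)$; draw the $k$-type on $S$ uniformly at random from $K(k, P_S)$, independently over different $k$-subsets at the same stage. For the first assertion, run this for $k = 1, \ldots, n$ and extend arbitrarily to a full $L$-structure (e.g. setting any remaining relations of arity $>n$ to false); for the second, run it for $k = 1, \ldots, N$, automatically producing a structure in $K$.

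Now fix $\varphi$. By compactness $\varphi$ follows from finitely many axioms: universal sentences in at most $n$ variables (trivially satisfied by construction, since every sampled $k$-type lies in $K(k)$) together with finitely many one-point extension axioms $(A_1, B_1), \ldots, (A_r, B_r)$. A union bound over these axioms reduces the problem to a single $(A, B)$ with $|A| = m$; a further union bound over the at most $N^m$ possible $m$-tuples $\overline{a}$ from $[N]$ reduces it to showing that the probability that a specific $\overline{a}$ realizes $\theta_A$ but has no $b \in [N] \setminus \overline{a}$ with $(\overline{a}, b)$ realizing $\theta_B$ is $o(N^{-m})$.

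Fix such $\overline{a}$ and set $X_b = \mathbf{1}[(\overline{a}, b)\text{ realizes }\theta_B]$ for $b \in [N]\setminus\overline{a}$. Conditioning on the types on subsets of $\overline{a}$ (in particular assuming $\overline{a}$ realizes $\theta_A$), walk through the construction on subsets of $\overline{a}\cup\{b\}$ that contain $b$: at each stage $k \leq m+1$, the $k$-type prescribed by $\theta_B$ on the relevant subset is always a valid solution to the corresponding amalgamation problem, because $\theta_B$'s restrictions are coherent and agree on $\overline{a}$ with the conditioned-on data; hence it is selected with probability at least $1/|K(k)|$. Multiplying over stages yields $\Pr[X_b = 1 \mid \overline{a}\text{ realizes }\theta_A] \geq c(A,B) > 0$, a constant independent of $N$, $\overline{a}$, and $b$. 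The crucial point --- and the main obstacle to organize carefully --- is that for $b \neq b'$ in $[N]\setminus\overline{a}$, the events $\{X_b = 1\}$ and $\{X_{b'} = 1\}$, conditioned on the types on subsets of $\overline{a}$, depend on disjoint families of random choices (those on subsets of $\overline{a}\cup\{b\}$ containing $b$, versus those on subsets of $\overline{a}\cup\{b'\}$ containing $b'$), and are therefore conditionally independent. Given this, $\sum_b X_b$ stochastically dominates $\mathrm{Binomial}(N-m, c)$ conditional on $\overline{a}$ realizing $\theta_A$, so the conditional failure probability is at most $(1-c)^{N-m} = o(N^{-m})$, closing the union bound. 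Running the same argument all the way up to $k = N$ under amalgamation at every level gives the stronger second assertion, since the resulting model then lies in $K$.
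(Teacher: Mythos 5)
Your construction and analysis coincide with the paper's almost exactly: the same inductive random assignment of quantifier-free $k$-types drawn uniformly from the solution sets $K(k,P)$, the same conditional-independence observation for distinct witnesses $b,b'$, and the same two-layer union bound giving $N^{m}(1-c)^{N-m}\to 0$. The unbounded case (second assertion) is complete as you have written it.

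There is, however, a genuine gap in your treatment of the bounded case. The theory $T_{K,n}$ contains \emph{all} one-point extension axioms, with no restriction on $|A|$ and $|B|$, so you must handle axioms $(A,B)$ with $|B|=m+1>n$. Your positive-probability estimate walks through "each stage $k\le m+1$," but your construction only has stages $k\le n$; the instances of relations of arity greater than $n$ on tuples with more than $n$ distinct elements are never assigned by the staged process, and your proposal to "extend arbitrarily, e.g.\ setting any remaining relations of arity $>n$ to false" destroys the argument for exactly these axioms. Concretely: take $L=\{R\}$ with $R$ ternary, $K$ the class of all finite $3$-hypergraphs, and $n=2$. Then $T_{K,2}$ contains the extension axiom asserting that every pair of distinct points extends to a triple satisfying $R$, but your structure $M_N$ has no positive instances of $R$ on triples of distinct elements, so this axiom fails with probability $1$ for every $N$. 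The fix is the one the paper uses: assign each remaining instance $R(i_1,\dots,i_r)$ (arity $r>n$, at least $n+1$ distinct entries) independently with probability $\tfrac12$, and note that since $\theta_B$ mentions only finitely many such instances, this contributes an additional factor of at least $2^{-m'}$ to the constant $c(A,B)$, where $m'$ is the number of such instances $\theta_B$ decides; conditional independence over distinct $b$ is preserved because these extra coin flips for tuples meeting $b$ are disjoint from those for tuples meeting $b'$. With that modification your argument matches the paper's proof.
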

\begin{proof}
I will define a probability measure $\mu_N$ on $L[N]$ for each $N\in\omega$ by describing a probabilistic construction of a structure $M_N\in L[N]$. Recall the notation above and in Remark~\ref{rem:fraissenotation}.

We assign quantifier-free $k$-types to each subset of size $k$ from $[N]$ by induction. When $k = 0$, there is no choice: by hereditarity and the joint embedding property, there is a unique empty structure in $K(0)$. When $k=1$, for each $i\in [N]$, choose the quantifier-free $1$-type of $\{i\}$ uniformly at random from $K(1)$. Now proceed inductively: having assigned quantifier-free $l$-types to all subsets of size $l$ with $l<k$, we wish to assign quantifier-free $k$-types. For each $k$-tuple $i_1,\dots,i_k$ of distinct elements from $[N]$, let $P = \{p_S\mid S\in \mathcal{P}^-([k])\}$ be the collection of quantifier-free types assigned to all proper subtuples, i.e.\ $p_S(\overline{x}_S) = \mathrm{qftp}(\{i_j\mid j\in S\})$. If $T_K$ has basic disjoint $k$-amalgamation, $K(k,P)$ is nonempty and finite, and we may choose the quantifier-free $k$-type of $i_1,\dots,i_k$ uniformly at random from $K(k,P)$.

Now if $T_K$ has basic disjoint $k$-amalgamation for all $k$, we can continue this construction all the way up to $k = N$, so that the resulting structure $M_N$ is in $K(N)$. Call this the unbounded case. On the other hand, if $T_K$ has basic disjoint $k$-amalgamation only for $k\leq n$, then we stop at $k = n$. To complete the construction, we assign any remaining relations completely freely at random. That is, for each relation $R$ (of arity $r > n$) and $r$-tuple $i_1,\dots,i_r$ containing at least $n+1$ distinct elements, we set $R(i_1,\dots,i_r)$ with probability $1/2$. The result is an $L$-structure $M_N$ which may not be in $K$, but the induced structures of size at most $n$ are guaranteed to be in $K$. Call this the bounded case.

I claim that if $\varphi$ is one of the axioms of $T_{K,n}$ (in the bounded case) or $T_K$ (in the unbounded case), then $\lim_{N\rightarrow \infty} \mu_N([\varphi]) = 1$. 

Each universal axiom $\varphi$ has the form $\A{x_1,\dots,x_k} \psi(\overline{x})$ (with $k\leq n$ in the bounded case), where $\psi$ is quantifier-free and true on all $k$-tuples from structures in $K$. Since all substructures of our random structure of size at most $k$ are in $K$, $\varphi$ is always satisfied by $M_N$, and so $\mu_N([\varphi]) = 1$ for all $N$.

Now suppose $\varphi$ is the one-point extension axiom $\A{\overline{x}} \E{y} (\theta_A(\overline{x}) \rightarrow \theta_B(\overline{x},y)).$ Let $\overline{a}$ be a tuple of $|A|$-many distinct elements from $[N]$ and $b$ any other element. Conditioning on the event that $M_n\models \theta_A(\overline{a})$, there is a positive probability $\varepsilon$ that $M_N\models \theta_B(\overline{a},b)$.

Indeed, in the unbounded case, or when $|A|< n$ in the bounded case, $\theta_B$ specifies the quantifier-free $|B|$-type of the tuple $\overline{a}b$ among those allowed by $K$. There is a positive probability ($1/|K(1)|$) that the correct $1$-type is assigned to $b$, and, given that the correct $l$-type has been assigned to all subtuples of $\overline{a}b$ involving $b$ of length $l<k$, there is a positive probability ($1/|K(k,P)|$ for the appropriate basic disjoint $k$-amalgamation problem $P$) that the correct $k$-type is assigned to a given subtuple of length $k$. Then $\varepsilon$ is the product of all these probabilities for $1\leq k \leq |B|$. When $|A|\geq n$ in the unbounded case, the above reasoning applies for the subtuples of $\overline{a}b$ of length at most $n$. On longer tuples, since $\theta_B$ only mentions finitely many relations, and the truth values of these relations are assigned freely at random, there is some additional positive probability that these will be decided in a way satisfying $\theta_B$ (at least $1/2^m$, where $m$ is the minimum number of additional instances of relations which need to be decided positively or negatively to ensure satisfaction of $\theta_B$). 

Moreover, for distinct elements $b$ and $b'$, the events that $\overline{a}b$ and $\overline{a}b'$ satisfy $\theta_B$ are conditionally independent, since the quantifier-free types of tuples involving elements from $\overline{a}$ and $b$ but not $b'$ are decided independently from those of tuples involving elements from $\overline{a}$ and $b'$ but not $b$, conditioned on the quantifier-free type assigned to $\overline{a}$.

Now we compute the probability that $\varphi$ is \emph{not} satisfied by $M_N$. Conditioned on the event that $M_N\models \theta_A(\overline{a})$, the probability that $M_N\not\models \E{y} \theta_B(\overline{a},y)$ is $(1-\varepsilon)^{N-|A|}$, since there are $N-|A|$ choices for the element $b$, each with independent probability $(1-\varepsilon)$ of failing to satisfy $\theta_B$. Removing the conditioning, the probability that $M_N\not\models \E{y}(\theta_A(\overline{a})\rightarrow \theta_B(\overline{a},y))$ for any given $\overline{a}$ is at most $(1-\varepsilon)^{N-|A|}$, since the formula is vacuously satisfied when $\overline{a}$ does not satisfy $\theta_A$. Finally, there are $N^{|A|}$ possible tuples $\overline{a}$, so the probability that $M_N\not\models \A{\overline{x}}\E{y} (\theta_A(\overline{x})\rightarrow \theta_B(\overline{x},y))$ is at most $N^{|A|}(1-\varepsilon)^{N-|A|}$. Since $|A|$ is constant, the exponential decay dominates the polynomial growth, and $\lim_{N\to\infty} \mu_N([\lnot \varphi]) = 0$, so $\lim_{N\to\infty} \mu_N([\varphi]) = 1$. 

To conclude, any sentence $\psi\in T_{K,n}$ is a logical consequence of finitely many of the axioms $\varphi_1,\dots,\varphi_m$ considered above. We need only pick $N$ large enough so that $\mu_N([\varphi_i]) > 1-1/m$ for all $i$. Then $\mu_N([\bigwedge_{i = 1}^m\varphi_i]) >0$, so the conjunction $\bigwedge_{i = 1}^m\varphi_i$, and hence also $\psi$, has a model of size $N$. In the unbounded case, our construction ensures that this model is in $K$.
\end{proof}

\begin{corollary}\label{cor:main}
Any countably categorical theory $T$ with disjoint $n$-amalgamation for all $n\geq 2$ is pseudofinite.
\end{corollary}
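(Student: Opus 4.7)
The plan is to deduce this directly from Theorem~\ref{thm:main} by passing to the canonical language. Given a countably categorical $T$ in a language $L$ with disjoint $n$-amalgamation for all $n \geq 2$, the first step is to let $M$ be its unique countable model and form the canonical-language expansion $M'$ as in Definition~\ref{def:canonical}. Then $T' = \mathrm{Th}_{L'}(M')$ is interdefinable with $T$, and $M'$ is ultrahomogeneous, so by Theorem/Definition~\ref{thmdef:fraisse} it is the \Fraisse limit of its age $K$ and $T' = T_K$.

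The next step is to transfer the amalgamation hypothesis from $T$ to $T_K$. Since the basic relation symbols of $L'$ name complete types of $T$ over $\emptyset$, non-redundant types over an arbitrary parameter set correspond bijectively between $T$ and $T'$, and coherent $\mathcal{F}$-families of types correspond as well. Hence $T_K$ inherits disjoint $n$-amalgamation for all $n \geq 2$ from $T$. Proposition~\ref{prop:equivalence} then upgrades this to basic disjoint $n$-amalgamation for all $n$, which is precisely the hypothesis needed to invoke the unbounded case of Theorem~\ref{thm:main}.

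Applying Theorem~\ref{thm:main} gives that every sentence of $T_K$ has a finite model (in fact one lying in $K$), so $T'$ is pseudofinite. Pseudofiniteness is preserved by interdefinability: each $L$-sentence is logically equivalent, via the mutual definitions, to an $L'$-sentence, and reducts of finite structures are finite. Therefore $T$ is pseudofinite.

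There is no real obstacle here beyond bookkeeping; the only point that deserves explicit verification is that disjoint $n$-amalgamation is stable under the move to the canonical language, which is immediate because the definition is phrased purely in terms of (coherent families of) complete types. All of the genuine work --- the probabilistic construction and the estimate $N^{|A|}(1-\varepsilon)^{N-|A|} \to 0$ --- has already been absorbed into Theorem~\ref{thm:main}.
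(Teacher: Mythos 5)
Your proposal is correct and follows essentially the same route as the paper: pass to the canonical language, use preservation of pseudofiniteness under interdefinability, and apply the unbounded case of Theorem~\ref{thm:main}. The only cosmetic difference is that you invoke Proposition~\ref{prop:equivalence} to obtain basic disjoint $n$-amalgamation, whereas only the trivial direction (basic is a special case of general) is actually needed; your extra verification that the amalgamation hypothesis transfers to the canonical language is a harmless elaboration of a step the paper leaves implicit.
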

\begin{proof}
Let $T'$ be the equivalent of $T$ in the canonical language. Then it suffices to show that $T'$ is pseudofinite, since pseudofiniteness is preserved under interdefinability. But $T'$ is the generic theory for a \Fraisse class with basic disjoint $n$ amalgamation for all $n$, so by Theorem~\ref{thm:main}, it is pseudofinite.
\end{proof}

\begin{remark}\label{rem:reduct}
Since pseudofiniteness is preserved under reduct, the examples described in Example~\ref{ex:reducts} are pseudofinite. 
\end{remark}

\subsection{Relationship to other notions}\label{sec:notions}

The notion of $n$-amalgamation has been studied in other model-theoretic contexts, usually in the form of \emph{independent $n$-amalgamation}. Given some notion of independence, $\ind$, the main example being nonforking independence in a simple theory, an independent $n$-amalgamation problem is given by a coherent $\mathcal{P}^-([n])$-family of types over $A$, with the non-redundancy condition replaced by the condition that any realization $\{\overline{a}_i\mid i\in S\}$ of $p_S(\overline{x}_S)$ is an independent set over $A$ with respect to $\ind$.

In the case $n = 3$, independent $3$-amalgamation over models is often called the independence theorem. It is a well-known theorem of Kim and Pillay that the independence theorem, along with a few other natural properties, characterizes forking in simple theories.

\begin{theorem}[\cite{KP2}, Theorem 4.2]\label{thm:KP}
Let $T$ be a complete theory and $\ind$ a ternary relation, written $a\ind_A B$, where $a$ is a finite tuple and $A$ and $B$ are sets. As usual, all tuples and sets come from some highly saturated model of $T$. Suppose that $\ind$ satisfies the following properties:
\begin{enumerate}
\item (Invariance) If $a\ind_A B$ and $\mathrm{tp}(a'A'B') = \mathrm{tp}(aAB)$, then $a'\ind_{A'} B'$.
\item (Local character) For all $a,B$, there is $A\subseteq B$ such that $|A| \leq |T|$ and $a\ind_{A} B$.
\item (Finite character) $a\ind_A B$ if and only if for every finite tuple $b$ from $B$, $a\ind_A Ab$.
\item (Extension) For all $a$, $A$, and $B$, there is $a'$ such that $\mathrm{tp}(a'/A) = \mathrm{tp}(a/A)$ and $a'\ind_A B$.
\item (Symmetry) If $a\ind_A Ab$, then $b\ind_A Aa$.
\item (Transitivity) If $A\subseteq B\subseteq C$, then $a\ind_A B$ and $a\ind_{B} C$ if and only if $a\ind_A C$.
\item (Independence theorem) Let $M\models T$ be a model, $a$ and $a'$ tuples such that $\mathrm{tp}(a/M) = \mathrm{tp}(a'/M)$ and $A$ and $B$ sets. If $A\ind_M B$, $a\ind_M A$, and $a'\ind_M B$, then there exists $a''$ such that $\mathrm{tp}(Aa''/M) = \mathrm{tp}(Aa/M)$, $\mathrm{tp}(Ba''/M) = \mathrm{tp}(Ba'/M)$, and $a''\ind_M AB$. 
\end{enumerate}
Then $T$ is simple, and $\ind$ is nonforking ($\ind = \ind[f]$).
\end{theorem}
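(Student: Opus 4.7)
The plan is to prove simultaneously that $T$ is simple and that $\ind$ coincides with non-forking independence $\ind[f]$. The strategy compares $\ind$ with non-dividing $\ind[d]$, using Shelah's criterion that $T$ is simple if and only if non-dividing has local character. The first main step is to show $\ind \subseteq \ind[d]$: if $a \ind_A B$, then $\text{tp}(a/AB)$ does not divide over $A$. Combined with local character of $\ind$, this forces non-dividing to have local character, hence $T$ is simple, and so $\ind[f] = \ind[d]$. The second main step is the reverse inclusion $\ind[f] \subseteq \ind$, using uniqueness of non-forking extensions in the now-simple theory.

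For $\ind \subseteq \ind[d]$, I would suppose for contradiction that some $\varphi(x,b)\in\text{tp}(a/AB)$ divides over $A$, witnessed by an $A$-indiscernible sequence $(b_i)_{i<\omega}$ with $b_0 = b$ and $\{\varphi(x,b_i)\}$ $k$-inconsistent for some $k$. Using extension (and after enlarging $A$ to a model $M$ with $a \ind_A M$ and $(b_i)_{i<\omega} \ind_A M$, with indiscernibility of the sequence preserved over $M$), I would inductively construct tuples $a_i \equiv_{Mb_i} a$ with carefully arranged $\ind$-independence from the previously built data, and apply the independence theorem over $M$ repeatedly to amalgamate the types $\text{tp}(a_i/Mb_i)$ into a single realization satisfying $\varphi(x,b_i)$ for arbitrarily many $i$, contradicting $k$-inconsistency. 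This uses invariance, symmetry, extension, and the independence theorem all crucially.

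For $\ind[f] \subseteq \ind$, suppose $a \ind[f]_A B$. Extend $A$ to a model $M$; using extension, find $a' \equiv_A a$ with $a' \ind_A MB$, so in particular $a' \ind_M B$. By the first step $a' \ind[d]_M B$, hence $a' \ind[f]_M B$. Then $a$ and $a'$ are two non-forking extensions of $\text{tp}(a/M)$ to $MB$, so by uniqueness of non-forking extensions over models in a simple theory they realize the same type over $MB$. Invariance transfers $a' \ind_M B$ to $a \ind_M B$, and transitivity (together with $a \ind_A M$, obtained from $a \ind_A B$-style maneuvers using extension) pulls the conclusion down to $a \ind_A B$.

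The main obstacle is the inductive amalgamation in the proof of $\ind \subseteq \ind[d]$. The independence theorem hands us only a single amalgamation step for two types over a model $M$, so iterating it along an $M$-indiscernible sequence requires carefully maintaining the required $\ind$-relations after each amalgamation step, invoking symmetry and transitivity at every stage. This is the technical heart of the Kim--Pillay argument; everything else in the theorem follows by combining this inclusion with the abstract properties of $\ind$ and Shelah's local-character characterization of simplicity.
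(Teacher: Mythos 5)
The paper does not prove this statement at all: it is quoted verbatim from Kim and Pillay (\cite{KP2}, Theorem 4.2) and used as a black box in the proof of Theorem~\ref{thm:simple}, so there is no in-paper argument to compare against. Judged on its own terms, your first main step ($\ind\subseteq\ind[d]$ via an iterated application of the independence theorem along an $\ind$-independent $M$-indiscernible sequence, then local character of $\ind$ forcing local character of dividing, hence simplicity and $\ind[f]=\ind[d]$) is the correct skeleton of the actual Kim--Pillay argument, modulo the usual care needed to replace the given dividing witness by an $\ind$-Morley sequence before amalgamating.

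The reverse inclusion as you have written it contains a genuine error. You invoke ``uniqueness of non-forking extensions over models in a simple theory,'' but that is a characterization of \emph{stability}, not simplicity: in a simple unstable theory such as the random graph, a type over a model $M$ has many pairwise distinct nonforking extensions to $Mb$ (e.g.\ the one putting an edge to $b$ and the one omitting it). What simplicity gives you over models is only the \emph{amalgamation} statement of the independence theorem, not uniqueness, so you cannot conclude $a\equiv_{MB}a'$ and transfer $a'\ind_M B$ to $a$ by invariance. There is also a smaller mismatch: you arranged only $a'\equiv_A a$, which does not give $\text{tp}(a'/M)=\text{tp}(a/M)$, so $a$ and $a'$ need not even be extensions of a common type over $M$. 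The standard repair (and what Kim--Pillay actually do) is to prove the converse by a pigeonhole argument: build a long $\ind$-independent sequence $(B_i)$ of $A$-conjugates of $B$; by the already-established forward inclusion this is a nondividing sequence, so $a\ind[f]_A B$ lets you realize $\bigcup_i \text{tp}(a/AB)(x,B_i)$ by some $a^*$; local character and transitivity of $\ind$ then force $a^*\ind_{A}B_i$ for some $i$, and invariance pulls this back to $a\ind_A B$. With that substitution your outline matches the genuine proof.
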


Disjoint $n$-amalgamation is a strong form of independent amalgamation, where the relevant independence relation is the disjointness relation $\ind[=]$, defined by $A\ind[=]_C B$ if and only if $A\cap B \subseteq C$. We say a theory has trivial forking if $\ind[f] = \ind[=]$. 

\begin{theorem}\label{thm:simple}
A countably categorical theory $T$ with disjoint $2$-amalgamation (\ie trivial acl) and disjoint $3$-amalgamation is simple with trivial forking.
\end{theorem}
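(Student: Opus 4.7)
The plan is to apply the Kim--Pillay characterization (Theorem~\ref{thm:KP}) to the disjointness relation $\ind[=]$, verifying each of its seven axioms. Five of the axioms---invariance, symmetry, transitivity, local character, and finite character---are purely set-theoretic consequences of the definition $A\ind[=]_C B \iff A\cap B \subseteq C$. For instance, local character holds because $a\cap B$ is a finite subset of $B$ whenever $a$ is a finite tuple, so we may take this as the required $A$. Symmetry and transitivity reduce to the identities $a\cap Ab \subseteq A \Leftrightarrow a\cap b\subseteq A \Leftrightarrow b\cap Aa\subseteq A$ and the observation that $A\subseteq B\subseteq C$ forces $a\cap C\subseteq A$ to be equivalent to the conjunction of $a\cap B\subseteq A$ and $a\cap C\subseteq B$.

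The remaining two axioms are where the amalgamation hypotheses enter. For extension, given $a$, $A$, $B$, I would pose the disjoint $2$-amalgamation problem over $A$ whose two pieces are (the restrictions to the non-parameter parts of) $\text{tp}(a/A)$ and $\text{tp}(B/A)$, in disjoint variable tuples. A solution, available by disjoint $2$-amalgamation, produces a pair $(a', B')$ realizing these types with $a' \cap B' \subseteq A$; transferring via an automorphism over $A$ taking $B'$ to $B$ yields the desired $a''$ with $\text{tp}(a''/A) = \text{tp}(a/A)$ and $a''\cap B\subseteq A$.

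For the independence theorem, suppose $M\models T$, $\text{tp}(a/M) = \text{tp}(a'/M)$, $A\cap B\subseteq M$, $a\cap A\subseteq M$, and $a'\cap B\subseteq M$. After replacing $a$ and $a'$ by conjugates over $M$ I may assume $a\cap M = a'\cap M = \emptyset$, so $a\cap A = a'\cap B = \emptyset$. Set up a disjoint $3$-amalgamation problem over $M$ with variable tuples $\overline{x}_1,\overline{x}_2,\overline{x}_3$ enumerating $A\setminus M$, $B\setminus M$, and $a$ respectively; the three $2$-types are
\[
p_{\{1,2\}} = \text{tp}((A\setminus M)(B\setminus M)/M),\quad p_{\{1,3\}} = \text{tp}((A\setminus M)a/M),\quad p_{\{2,3\}} = \text{tp}((B\setminus M)a'/M).
\]
These form a coherent family because $a$ and $a'$ have the same type over $M$, and their pairwise non-redundancy is guaranteed by the three disjointness hypotheses. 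Disjoint $3$-amalgamation produces a completion whose realization yields an $a''$ with $\text{tp}(Aa''/M) = \text{tp}(Aa/M)$ and $\text{tp}(Ba''/M) = \text{tp}(Ba'/M)$; non-redundancy of the solution together with $a''\cap M = \emptyset$ (which follows from the type of $a''$ over $M$ matching that of $a$) gives $a''\cap AB\subseteq M$.

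The main obstacle is not technical difficulty but the bookkeeping in the independence theorem: one must cleanly separate each set $A$, $B$, $a$ into its intersection with $M$ and its complement, so that the resulting disjoint $3$-amalgamation problem is genuinely in variable tuples disjoint from the parameters. Once the seven axioms are verified, Theorem~\ref{thm:KP} delivers simultaneously that $T$ is simple and that $\ind[=] = \ind[f]$, which is exactly the statement of trivial forking.
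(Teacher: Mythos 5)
Your proposal is correct and follows essentially the same route as the paper: apply Theorem~\ref{thm:KP} to $\ind[=]$, get local character from $a\cap B$, get extension from trivial acl (the paper realizes $\text{tp}(a/A)\cup\{a_i\neq b\}$ by compactness where you phrase it as a disjoint $2$-amalgamation problem, which is equivalent), and derive the independence theorem from disjoint $3$-amalgamation of exactly the three $2$-types you list. One small wording fix: you cannot arrange $a\cap M=\emptyset$ by passing to a conjugate over $M$, since automorphisms over $M$ fix those coordinates; instead, as the paper does, simply delete the coordinates of $a$, $a'$, $A$, $B$ that lie in $M$ before posing the amalgamation problem.
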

\begin{proof}
We can use Theorem~\ref{thm:KP} to show that $\ind[f] = \ind[=]$. Most of the conditions are straightforward to check, so I'll only remark on a few of them. For local character, we can take $A = a\cap B$, so $A$ is finite and $a\ind[=]_A B$. For extension, we find $a'$ by realizing the type $\mathrm{tp}(a/A)\cup \{a_i\neq b\mid a_i\text{ from }a \text{ such that }a_i\notin A, \text{ and } b\in B\}$. This is consistent by trivial acl and compactness. Finally, for the independence theorem, we apply disjoint $3$-amalgamation to amalgamate the three $2$-types $p_{\{12\}} = \mathrm{tp}(aA/M)$, $p_{\{13\}} = \mathrm{tp}(a'B/M)$, $p_{\{23\}} = \mathrm{tp}(AB/M)$ (first removing any redundant elements of $M$ from $a$, $a'$, $A$, and $B$).
\end{proof}

\begin{remark}\label{rem:no3exp}
A consequence of Theorem~\ref{thm:simple} is that the class of triangle-free graphs $\mathcal{G}_\triangle$ does not admit a \Fraisse expansion to a class with disjoint $n$-amalgamation for all $n$, since its generic theory is not simple \cite{Shelah500}.
\end{remark}

\begin{remark}\label{rem:nsimple}
Motivated by the fact that many examples of simple theories (such as $\TRG$ and ACFA \cite{CH}) satisfy independent $n$-amalgamation for $n \geq 3$, Kolesnikov~\cite{Kolesnikov} and Kim, Kolesnikov, and Tsuboi~\cite{KKT} developed a hierarchy of notions of $n$-simplicity for $1\leq n \leq \omega$, where $1$-simplicity coincides with simplicity. If a countably categorical theory $T$ has disjoint $k$-amalgamation for all $2\leq k\leq n$, then it is $(n-2)$-simple with trivial forking, and if it has disjoint $n$-amalgamation for all $n$, then it is $\omega$-simple.
\end{remark}

Several other appearances of $n$-amalgamation properties in model theory are worth mentioning. In the context of abstract elementary classes, independent $n$-amalgamation of models goes by the name ``excellence" (see \cite{BaldwinBook}, for example). Disjoint $n$-amalgamation for classes of finite structures has also been studied by Baldwin, Koerwien, and Laskowski with applications to AECs~\cite{BKL}. And in the context of stable theories, Goodrick, Kim, and Kolesnikov have uncovered a connection between existence and uniqueness of independent $n$-amalgamation and definable polygroupoids \cite{GKK}, generalizing earlier work of Hrushovski on independent $3$-amalgamation and groupoids \cite{HGroupoids}.

The observation that disjoint $n$-amalgamation is sufficient for pseudofiniteness generalizes and unifies a number of earlier observations. I will note a few here:
\begin{itemize}
\item Oberschelp \cite{Oberschelp} identified an unusual syntactic condition which is sufficient for the almost-sure theory of a class of finite structures under the uniform measures to agree with its generic theory. A universal sentence is called \emph{parametric} if it is of the form $\A{x_1,\dots,x_n} ((\bigwedge_{i\neq j} x_i\neq x_j) \rightarrow \varphi(\overline{x}))$ where $\varphi$ is a Boolean combination of atomic formulas $R(y_1,\dots,y_m)$ such that each variable $x_i$ appears among the $y_j$. For example, reflexivity $\A{x} R(x,x)$ and symmetry $\A{x,y} (x\neq y \rightarrow (R(x,y) \leftrightarrow R(y,x)))$ are parametric conditions, while transitivity $\A{x,y,z} ((R(x,y)\land R(y,z))\rightarrow R(x,z))$ is not a parametric condition, since each atomic formula appearing only involves two of the three quantified variables. A \emph{parametric class} is the class of finite models of a set of parametric axioms. 

Any parametric class has disjoint $n$-amalgamation for all $n$. It is easiest to see this by checking basic disjoint $n$-amalgamation: the restrictions imposed by a parametric theory on the relations involving non-redundant $n$-tuples and $m$-tuples are totally independent when $n\neq m$.

\item In their work on the random simplicial complex, Brooke-Taylor \& Testa \cite{BTT} introduced the notion of a \emph{local \Fraisse class} and showed that the generic theory of a local \Fraisse class is pseudofinite, by methods similar to those in the proof of Theorem~\ref{thm:main}. A universal sentence is called \emph{local} if it is of the form $\A{x_1,\dots,x_n} (R(x_1,\dots,x_n)\rightarrow \psi(\overline{x}))$, where $R$ is a relation in the language and $\psi$ is quantifier-free. A \emph{local class} is the class of finite models of a set of local axioms.

Again, any local class has $n$-amalgamation for all $n$. A local theory only imposes restrictions on tuples which satisfy some relation. So disjoint $n$-amalgamation problems can be solved ``freely" by simply not adding any further relations.

\item In \cite{Ahlman}, Ahlman has investigated countably categorical theories in a binary relational language (one with no relation symbols of arity greater than $2$) which are simple with $\mathrm{SU}$-rank $1$ and trivial pregeometry. In the case when $\mathrm{acl}^{\mathrm{eq}}(\varnothing) = \varnothing$, this agrees with what I call a simple theory with trivial forking ($\ind[f] = \ind[=]$) above.

Ahlman shows that in such a theory $T$ there is a $\varnothing$-definable equivalence relation $\xi$ with finitely many infinite classes such that $T$ can be axiomatized by certain ``$(\xi,\Delta)$-extension properties" describing the possible relationships between elements in different classes. Further, he shows that these theories are pseudofinite. The definition of $(\xi,\Delta)$-extension property is somewhat technical, so I will not give it here. But this condition implies that $T$ has an expansion (obtained by naming the finitely many classes of $\xi$) with $n$-amalgamation for all $n$. The fact that the language is binary ensures that describing the possible relationships between pairs of elements suffices.
\end{itemize}

\section{Generic theories of equivalence relations}\label{sec:eqrel}

\subsection{Filtered \Fraisse classes}\label{sec:filtered}

We will extend the disjoint $n$-amalgamation argument for pseudofiniteness to certain non-simple theories, using the notion of a filtered \Fraisse class.

\begin{definition}\label{def:filtered}
A \Fraisse class K is \emph{filtered} by a chain $K_0\subseteq K_1\subseteq K_2\subseteq \dots $ if each $K_n$ is a \Fraisse class, and $\bigcup_{n\in\omega} K_n = K$.
\end{definition}

\begin{theorem} \label{thm:filtered}
Let $K$ be a \Fraisse class filtered by $\{K_n\mid n\in\omega\}$. Then $\varphi\in T_K$ if and only if $\varphi\in T_{K_n}$ for all sufficiently large $n$. 
\end{theorem}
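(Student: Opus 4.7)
The plan is to combine the explicit axiomatization of Theorem~\ref{thm:axiomatization} with compactness and the completeness of the theories involved. Both $T_K$ and each $T_{K_n}$ are complete by Theorem/Definition~\ref{thmdef:fraisse}, since $K$ and every $K_n$ is a \Fraisse class in the strong sense.

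For the forward direction, I would start from $\varphi \in T_K$. By compactness, $\varphi$ is a logical consequence of finitely many axioms $\psi_1,\dots,\psi_\ell$ of the three kinds listed in Theorem~\ref{thm:axiomatization} (universal axioms, implication axioms, and one-point extension axioms). Only finitely many structures from $K$ appear in the $\psi_i$, so because $K = \bigcup_n K_n$ is an increasing union there is some $N$ such that every structure appearing in $\psi_1,\dots,\psi_\ell$ already lies in $K_N$. For $n\geq N$ I would check case by case that each $\psi_i$ is a consequence of $T_{K_n}$: one-point extension axioms and implication axioms that mention only structures $A,B \in K_n$ are literally among the axioms of $T_{K_n}$; and the universal $m$-axiom of $T_K$, $\forall \overline{x}\,(\bigwedge_{i\neq j} x_i\neq x_j)\rightarrow \bigvee_{A\in K(m)}\theta_A(\overline{x})$, is implied by the stronger universal $m$-axiom of $T_{K_n}$, since $K_n(m)\subseteq K(m)$ makes the disjunction $\bigvee_{A\in K_n(m)}\theta_A(\overline{x})$ more restrictive. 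Thus $\varphi \in T_{K_n}$ for all $n\geq N$.

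For the converse I would just use completeness. Suppose $\varphi \in T_{K_n}$ for all sufficiently large $n$ but $\varphi\notin T_K$. Since $T_K$ is complete we get $\lnot\varphi\in T_K$, and applying the forward direction to $\lnot\varphi$ yields $\lnot\varphi\in T_{K_n}$ for all sufficiently large $n$, contradicting the hypothesis (by consistency of $T_{K_n}$).

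I do not anticipate any substantive obstacle: the real content is the bookkeeping in the forward direction, showing that each of the three kinds of axioms from Theorem~\ref{thm:axiomatization} is preserved or strengthened when one moves from $T_K$ to $T_{K_n}$. The only small subtlety is that universal axioms are not literally preserved but go in the right direction because the inclusion $K_n(m) \subseteq K(m)$ shrinks the disjunction.
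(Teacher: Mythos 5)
Your proof is correct and follows essentially the same route as the paper: reduce to the explicit axioms of Theorem~\ref{thm:axiomatization} via compactness, observe that each axiom of $T_K$ becomes a consequence of $T_{K_n}$ once the finitely many structures it mentions lie in $K_n$, and dispatch the converse by completeness. The only cosmetic difference is that the paper handles the universal part by noting that any universal sentence in $T_K$ holds throughout $K$ and hence throughout $K_n\subseteq K$ (so it lies in $T_{K_n}$ for \emph{every} $n$), whereas you compare the disjunctions $\bigvee_{A\in K_n(m)}\theta_A$ and $\bigvee_{A\in K(m)}\theta_A$ directly; both arguments are sound.
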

\begin{proof}
It suffices to check for each of the axioms of $T_K$ given in Theorem~\ref{thm:axiomatization}. Since each $K_n$ is a subclass of $K$, every universal sentence in $T_K$ is also in $T_{K_n}$. Let $(A,B)$ be a one-point extension with corresponding axiom $\varphi$. For large enough $n$, the structures $A$ and $B$ are in $K_n$, so $(A,B)$ is also a one-point extension in $K_n$, and $\varphi\in T_{K_n}$.
\end{proof}

Pseudofiniteness is preserved in filtered \Fraisse classes.

\begin{corollary}\label{cor:filtered}
If a \Fraisse class $K$ is filtered by $\{K_n\mid n\in\omega\}$ and each generic theory $T_{K_n}$ is pseudofinite, then the generic theory $T_K$ is pseudofinite.
\end{corollary}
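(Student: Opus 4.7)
The proof should be essentially a one-line consequence of Theorem~\ref{thm:filtered}, which was just established. The plan is as follows: to show $T_K$ is pseudofinite, I take an arbitrary sentence $\varphi \in T_K$ and produce a finite model of $\varphi$. By Theorem~\ref{thm:filtered}, $\varphi \in T_{K_n}$ for all sufficiently large $n$, so in particular I may fix some $n$ with $\varphi \in T_{K_n}$. Since $T_{K_n}$ is pseudofinite by hypothesis, $\varphi$ has a finite model. Since $\varphi$ was arbitrary, $T_K$ is pseudofinite.

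There is no real obstacle here; all the work has been done in Theorem~\ref{thm:filtered}. The only thing worth emphasizing in the write-up is the subtle point hinted at in the introduction: the finite model of $\varphi$ witnessing pseudofiniteness of $T_K$ comes from a finite model witnessing pseudofiniteness of $T_{K_n}$ for some $n$ depending on $\varphi$. Thus, even if each $T_{K_n}$ is the almost-sure theory of a natural sequence $(K_n(m), \mu_{n,m})_{m \in \omega}$ of finite structures (for instance via Theorem~\ref{thm:main}), the sentences of $T_K$ need not be captured by a single such sequence — different sentences may need different $n$, and hence different sequences. This matches the remark in Section~\ref{sec:intro} that ``each sentence of the theory is shown to be in the almost-sure theory for a sequence $(K(n),\mu_n)_{n\in\omega}$ of classes of finite structures equipped with probability measures, and hence is pseudofinite for a good probabilistic reason, but different sentences require different sequences.''
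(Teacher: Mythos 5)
Your proof is correct and is exactly the paper's argument: apply Theorem~\ref{thm:filtered} to place $\varphi$ in some $T_{K_n}$ and then invoke pseudofiniteness of that theory. The additional remark about different sentences requiring different sequences is accurate and consistent with the discussion in Section~\ref{sec:intro}, though the paper's proof itself does not dwell on it.
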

\begin{proof}
Each sentence $\varphi$ in $T_K$ is also in $T_{K_n}$ for sufficiently large $n$, and hence $\varphi$ has a finite model.
\end{proof}

As a consequence, if $K$ is filtered by $\{K_n\mid n\in\omega\}$, and each $K_n$ admits a \Fraisse expansion with disjoint $n$-amalgamation for all $n$, then $K$ is pseudofinite. This argument is used in the next two sections to establish pseudofiniteness of the theories $\Tfeq$ and $\TCPZ$. 

It is worth noting that this method cannot be used to show that the theory of the generic triangle-free graph is pseudofinite. Let $G_1$, $G_2$, and $G_3$ be the graphs on three vertices with a single edge, two edges, and three edges respectively. For any filtration $\{K_n\mid n\in\omega\}$ of the \Fraisse class $\mathcal{G}_\triangle$ of triangle-free graphs, some $K_n$ must include the graphs $G_1$ and $G_2$ but not $G_3$. But Proposition~\ref{prop:nohenson} shows that such a class does not admit a \Fraisse expansion with disjoint $n$-amalgamation for all $n$.

\begin{proposition}\label{prop:nohenson}
Let $K$ be a \Fraisse class consisting of graphs (in the language with a single edge relation $E$), and suppose that $K$ contains the graphs $G_1$ and $G_2$ but not $G_3$. Then no \Fraisse expansion of $K$ has disjoint $2$-amalgamation and disjoint $3$-amalgamation. 
\end{proposition}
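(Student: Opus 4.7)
The plan is to suppose for contradiction that $K'$ is a \Fraisse expansion of $K$ admitting disjoint 2- and 3-amalgamation, and to construct a basic disjoint 3-amalgamation problem in $K'$ whose solution must have reduct isomorphic to $G_3$, contradicting $G_3 \notin K$.

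First I would handle the ``self-edge'' case: suppose there exist distinct $a, b \in M_{K'}$ with $\text{tp}(a) = \text{tp}(b) =: r$ and $aEb$. Setting $p(x_1,x_2) = \text{qf-tp}(a,b) \in K'(2)$, the choices $p_{ij}(x_i,x_j) := p(x_i,x_j)$ for $1 \le i < j \le 3$ form a coherent basic disjoint 3-amalgamation problem, since every variable is assigned 1-type $r$. Disjoint 3-amalgamation gives a solution $p_{123} \in K'(3)$, but $p_{123}$ must contain $x_i E x_j$ for every $i \ne j$, so its reduct is $G_3$---contradicting $G_3 \notin K$.

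The remaining case is that every 2-type on two vertices of the same 1-type is a non-edge; equivalently, the ``compatibility graph'' $H$ on the 1-types of $T_{K'}$ (with $\{r,r'\}$ an $H$-edge iff some 2-type of 1-types $(r,r')$ contains $x_1 E x_2$) is loopless. The analogous 3-amalgamation trick would go through from a triangle of three distinct 1-types in $H$, but directly producing such a triangle from the local hypothesis $G_1, G_2 \in K$, $G_3 \notin K$ seems to be the main obstacle: $G_2 \in K$ only supplies a path of length two in $H$, and $G_1 \in K$ only supplies non-$H$-edges over a given edge. My plan is to bypass this case using Theorem~\ref{thm:simple}: disjoint 2- and 3-amalgamation in $K'$ imply $T_{K'}$ is simple with trivial forking, so its reduct $T_K$ is also simple (reducts of simple theories are simple). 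Under the hypothesis, $M_K$ is a countable ultrahomogeneous triangle-free graph whose age contains both an edge together with an isolated vertex ($G_1$) and a path of length two ($G_2$); by the Lachlan--Woodrow classification of countable ultrahomogeneous graphs, the only possibility is $K = \mathcal{G}_\triangle$, whose generic theory is the theory of the Henson graph $H_3$. But this theory is known to have SOP$_3$ (Shelah) and hence is not simple, giving the required contradiction.
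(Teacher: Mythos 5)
Your argument is correct, but it takes a genuinely different route from the paper's, which is entirely internal and combinatorial. The paper's proof begins essentially as your first case does (basic disjoint $3$-amalgamation of three copies of a $2$-type whose two $1$-types coincide forces that $2$-type to be a non-edge), but then, instead of classifying, it exploits clause (2) of Definition~\ref{def:expansion}: lifting the one-point extensions $H\to G_1$ and $H\to G_2$ of the edgeless two-vertex graph $H$ through compatible expansions produces $1$-types $p,p',p''$ and $2$-types $q_2(x,y)$, $r_1(x,z)$, $r_2(y,z)$, each containing an edge, which assemble into a basic disjoint $3$-amalgamation problem whose solution reduces to $G_3$. That is precisely the ``main obstacle'' you name and then sidestep. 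Your detour --- Theorem~\ref{thm:simple} makes $T_{K'}$, hence its reduct $T_K$, simple, while Lachlan--Woodrow forces $K=\mathcal{G}_\triangle$, whose generic theory is SOP$_3$ and so not simple --- does go through, with two caveats you should make explicit: (i) Lachlan--Woodrow classifies countably \emph{infinite} ultrahomogeneous graphs, and finite ultrahomogeneous triangle-free graphs such as the $5$-cycle also contain $G_1$ and $G_2$ in their age, so you must first observe that disjoint $2$-amalgamation gives $M_{K'}$ trivial acl and hence forces $M_K$ to be infinite; (ii) Theorem~\ref{thm:simple} uses the full, non-basic disjoint $3$-amalgamation (its independence theorem is over models, with tuple variables), so your proof establishes the proposition only as literally stated, whereas the paper's argument rules out even expansions with merely basic disjoint $2$- and $3$-amalgamation. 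What your approach buys is brevity given the cited machinery; what it costs is self-containment (two heavy external inputs: the classification and Shelah's SOP$_3$ result) and portability to settings, such as $K_n$-free hypergraphs, where no classification is available. Note also that your first case, while correct, becomes superfluous once you invoke the classification argument, which does not depend on it.
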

\begin{proof}
Suppose for contradiction that $K$ has a \Fraisse expansion $K'$ in the language $L'$ with disjoint $2$-amalgamation and disjoint $3$-amalgamation. Let $p(x)$ be any quantifier-free $1$-type in $K'$. Then by disjoint $2$-amalgamation we can find some quantifier-free $2$-type $q(x,y)$ in $K'$ such that $q(x,y)\models p(x)\land p(y)\land x\neq y$. Now, letting $p_\varnothing$ be the unique quantifier-free $0$-type in $K'$, the family of types $\{p_\varnothing, p(x),p(y),p(z),q(x,y),q(y,z),q(x,z)\}$ is a basic disjoint $3$-amalgamation problem for $K'$. Then we must have $q(x,y)\models \lnot xRy$, for otherwise the reduct to $L$ of any solution to the $3$-amalgamation problem would be a copy of $G_3$ in $K$.

Let $H$ be the graph on two vertices, $v_1$ and $v_2$, with no edge. Note that $H$ is in $K$. Labeling the vertices of $G_1$ by $v_1$, $v_2$, $v_3$, so that the unique edge is $v_2Rv_3$, $G_1$ is a one-point extension of $H$. Now $H$ admits an expansion to a structure in $K'$ (described by $q(v_1,v_2)$) in which both vertices $v_1$ and $v_2$ have quantifier-free type $p$, so since $K'$ is a \Fraisse expansion of $K$, $G_1$ admits a compatible expansion to a structure in $K'$, call it $G_1'$. Let $p'(y) = \mathrm{qftp}_{G_1'}(v_3)$, and let $q_i(x,y) = \mathrm{qftp}_{G_1'}(v_i,v_3)$ for $i = 1,2$. Note that we have $q_i(x,y)\models p(x)\land p'(y)$ for $i = 1,2$, but $q_1(x,y)\models \lnot xRy$, while $q_2(x,y)\models xRy$. That is, the pair of $1$-types $p(x)$ and $p'(y)$ are consistent with both $xRy$ and $\lnot xRy$. We will use this situation to build a triangle.

Again, labeling the vertices of $G_2$ by $v_1$, $v_2$, $v_3$, so that $\lnot v_1Rv_2$, $G_2$ is a one-point extension of $H$. Since $H$ admits an expansion to a structure $H'$ in $K'$ so $H'\models q_1(v_1,v_2)$, $G_2$ admits a compatible expansion to a structure $G_2'$ in $K'$. Let $p'' = \mathrm{qftp}_{G_2'}(v_3)$, $r_1(x,z) = \mathrm{qftp}_{G_2'}(v_1,v_3)$, and $r_2(y,z) = \mathrm{qftp}_{G_2'}(v_2,v_3)$. Note that $r_1(x,z)\models p(x)\land p''(z)\land xRz$ and $r_2(y,z)\models p'(x) \land p''(z)\land yRz$.

Now the family of types $\{p_\varnothing, p(x),p'(y),p''(z),q_2(x,y),r_1(x,z),r_2(y,z)\}$ is a basic disjoint $3$-amalgamation problem for $K'$. But the reduct to $L$ of any solution is a copy of $G_3$ in $K$.
\end{proof}

\subsection{The theory \texorpdfstring{$\Tfeq$}{T*\_feq}}\label{sec:Tfeq}

Let $L$ be the language with two sorts, $O$ and $P$ (for ``objects" and ``parameters"), and a ternary relation $E_x(y,z)$, where $x$ is a variable of sort $P$ and $y$ and $z$ are variables of sort $O$. Then $\Kfeq$ is the class of finite $L$-structures with the property that for all $a$ of sort $P$, $E_a(y,z)$ is an equivalence relation on $O$.

$\Kfeq$ is a \Fraisse class. We define $\Tfeq$ to be the generic theory of $\Kfeq$. Our aim is to show that it is pseudofinite. Before giving the details of the proof, I will describe the simple idea: filter the class $\Kfeq$ by the subclasses $K_n$ in which each equivalence relation in the parameterized family has at most $n$ classes. Expand these classes by parameterized predicates naming each class. The resulting class has $n$-amalgamation for all $n$, and hence has pseudofinite generic theory. 

\begin{theorem}\label{thm:tfeq}
$\Tfeq$ is pseudofinite.
\end{theorem}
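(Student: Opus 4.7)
The plan is to apply the filtration method of Section~\ref{sec:filtered}. For each $n \in \omega$, let $K_n \subseteq \Kfeq$ be the subclass of finite structures in which every parameterized equivalence relation $E_a$ has at most $n$ classes. Each $K_n$ is hereditary and has the joint embedding property, and the amalgamation property holds: form the disjoint $\Kfeq$-amalgam of $B, C$ over $A$ and, whenever this causes some $E_p$ to exceed $n$ classes, merge ``extra'' classes from the $B$-side with ``extra'' classes from the $C$-side (this does not affect $E_p$ on $B$ or $C$ separately). So each $K_n$ is a \Fraisse class, and since any finite structure has a maximum finite $E_p$-class count, $\bigcup_{n} K_n = \Kfeq$. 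By Corollary~\ref{cor:filtered}, it suffices to show $T_{K_n}$ is pseudofinite for each $n$.

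For this, expand $L$ to $L_n' = L \cup \{R_1, \dots, R_n\}$, where each $R_i(x, y)$ is binary of sort $(P, O)$, intended as ``$y$ lies in the $i$th class of $E_x$''. Let $K_n'$ consist of those finite $L_n'$-structures whose $L$-reduct is in $K_n$ and in which, for every parameter $p$, each object satisfies exactly one $R_i(p, \cdot)$, with two objects carrying the same label under $p$ iff they are $E_p$-equivalent. I claim that $K_n'$ is a \Fraisse expansion of $K_n$ in the sense of Definition~\ref{def:expansion}. For condition (1), any $K_n$-structure admits a labeling by injecting its $\leq n$ classes under each $E_p$ into $[n]$. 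For condition (2), given a one-point extension $(A, B)$ in $K_n$ and an expansion $A'$, lift to $B'$ as follows: if the new element is a parameter, label each of its $\leq n$ classes distinctly; if it is an object joining an existing class under $p$, inherit that label; and if it starts a fresh class under $p$, use any unused label in $[n]$, which exists because $B$ has at most $n$ classes under $p$ and the fresh class is one of them.

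The crux is verifying that $T_{K_n'}$ has basic disjoint $k$-amalgamation for all $k \geq 2$. Disjoint $2$-amalgamation is immediate. For $k = 3$, the only substantive case is a triple $(p, a, b)$ with $p$ a parameter and $a, b$ objects: the given 2-types already specify labels $R_i(p, a)$ and $R_j(p, b)$, so the 3-type is forced to assert $E_p(a, b)$ exactly when $i = j$, and the resulting 3-type lies in $K_n'$ since at most $n$ labels are used. For $k \geq 4$, no new constraints arise: a coherent family of $(k-1)$-types pins down all pairwise label data, which in turn determines every instance of $E_p$ among the $k$ variables, while transitivity and symmetry of each $E_p$ are automatic because $E_p$ is read off a label function. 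Theorem~\ref{thm:main} applied to $K_n'$ then gives that $T_{K_n'}$ is pseudofinite; Theorem~\ref{thm:expansion} with Remark~\ref{rem:reduct} transfers pseudofiniteness to the reduct $T_{K_n}$; and Corollary~\ref{cor:filtered} yields pseudofiniteness of $\Tfeq$.

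The main obstacle is the careful verification of basic disjoint $k$-amalgamation in $K_n'$, in particular the observation that naming the classes reduces the ternary $E_p$ to a function of binary label data, so that the potentially problematic transitivity constraint at level $k = 4$ becomes automatic once the 3-type level is handled.
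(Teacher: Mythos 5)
Your proof is correct and takes essentially the same route as the paper's: the identical filtration of $\Kfeq$ by bounding the number of classes of each $E_a$, the identical expansion naming the classes with binary parameter-object labels, and the identical observation that the labels reduce all constraints to $2$-type data, yielding disjoint $k$-amalgamation for every $k$. The only cosmetic differences are notational (your $R_i$ versus the paper's $C_i$) and a slightly terser treatment of the amalgamation property for $K_n$ (the paper explicitly handles parameters not lying in the base $A$ by absorbing the new objects into a single existing class), but the argument is the same.
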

\begin{proof}
For $n\geq 1$, let $K_n$ be the subclass of $\Kfeq$ consisting of those structures with the property that for all $a$ of sort $P$, the equivalence relation $E_a$ has at most $n$ classes. Let's check that $K_n$ is a \Fraisse class. 

It clearly has the hereditary property. For the disjoint amalgamation property, suppose we have embeddings $f\colon A\hookrightarrow B$ and $g\colon A\hookrightarrow C$ of structures in $K_n$. We specify a structure $D$ with domain $A\cup (B\setminus f[A])\cup (C\setminus g[A])$ into which $B$ and $C$ embed in the obvious way over $A$. That is, for each parameter $a$ in $P(D)$, we must specify an equivalence relation on $O(D)$. If $a$ is in $P(A)$, it already defines equivalence relations on $B$ and $C$. First, number the $E_a$-classes in $A$ by $1,\dots,l$. Then, if there are further unnumbered $E_a$-classes in $B$ and $C$, number them by $l+1,\dots,m_B$ and $l+1,\dots,m_C$ respectively. Note that $m_B,m_C\leq n$. Now define $E_a$ in $O(D)$ to have $\mathrm{max}(m_B,m_C)$ classes by merging the classes assigned the same number in the obvious way. The situation is even simpler if $a$ is not in $P(A)$. Say without loss of generality it is in $P(B)$. Then we can extend $E_a$ to $O(C)$ by adding all elements of $O(C\setminus g[A])$ to a single existing $E_a$-class. The joint embedding property follows from the amalgamation property by taking $A$ to be the empty structure.

For any structure $A$ in $\Kfeq$, if $|O(A)| = N$, then for all $a\in P(A)$, the equivalence relation $E_a$ has at most $N$ classes, so $A\in K_N$. Hence $\Kfeq = \bigcup_{n = 1}^\infty K_n$. So $\Kfeq$ is a filtered \Fraisse class, and by Corollary~\ref{cor:filtered}, it suffices to show that each $T_{K_n}$ is pseudofinite.

Let $L_n'$ be the expanded language which includes, in addition to the relation $E$, $n$ binary relation symbols $C_1(x,y), \dots, C_n(x,y)$, where $x$ is a variable of sort $P$ and $y$ is a variable of sort $O$. Let $K_n'$ be the class of finite $L_n'$-structures which are expansions of structures in $K_n$ such that for all $a$ of sort $P$, each of the $E_a$-classes is picked by one of the formulas $C_i(a,y)$. 

We need to check that $K_n'$ is a \Fraisse expansion of $K_n$. Certainly we have $K_n = \{A\restriction L \mid A\in K_n'\}$, since every structure in $K_n$ can be expanded to one in $K_n'$ by labeling the classes for each equivalence relation arbitrarily. Suppose now that $(A,B)$ is a one-point extension in $K_n$ and $A'$ is an expansion of $A$ to a structure in $K_n'$. If the new element $b\in B$ is in $P(B)$, then it defines a new equivalence relation $E_b$ on $O(A) = O(B)$, and we can expand $B$ to $B'$ in $K_n'$ by labeling the $E_b$-classes arbitrarily. On the other hand, suppose $b$ is in $O(B)$. Then for each parameter $a$, either $b$ is an existing $E_a$-class labeled by $C_i(a,y)$, in which case we set $C_i(a,b)$, or $b$ is in a new $E_a$-class, in which case we set $C_j(a,b)$ for some unused $C_j$.

Finally, note that $T_{K_n'}$ has disjoint $2$-amalgamation, since it is a \Fraisse class with the disjoint amalgamation property. I claim that it also has disjoint $n$-amalgamation for all $n\geq 3$. Indeed, the behavior of the ternary relation $E_x(y,z)$ is entirely determined by the behavior of the binary relations $C_i(x,y)$, and an $L_{n}'$-structure $(P(A),O(A))$ is in $K_n'$ if and only if for every $a$ in $P(A)$ and $b$ in $O(a)$, $C_i(a,b)$ holds for exactly one $i$. So any inconsistency is already ruled out at the level of the $2$-types. Since in a coherent $\mathcal{P}^-([n])$-family of types for $n\geq 3$, every pair of variables is contained in one of the types, we conclude that there are no inconsistencies, and every disjoint $n$-amalgamation problem has a solution.

So $T_{K_n'}$ has disjoint $n$-amalgamation for all $n$, and hence it and its reduct $T_{K_n}$ are pseudofinite by Theorem~\ref{thm:main}.
\end{proof}

A natural question is whether $\Tfeq$ is, in fact, the almost-sure theory for the class $\Kfeq$ for the uniform measures. It is not, as the following proposition shows. Of course, since we have described $\Kfeq$ in a two-sorted language, there is some ambiguity as to what we mean by the uniform measures. For maximum generality, let us fix two increasing functions $f,g\colon \omega\to \omega$. For $n\in \omega$, let $\Kfeq(f(n),g(n))$ be the structures in $\Kfeq$ with object sort of size $f(n)$ and parameter sort of size $g(n)$, and let $\mu_{f(n),g(n)}$ be the uniform measure on $\Kfeq(f(n),g(n))$.

\begin{proposition}\label{prop:notuniform1}
There is a sentence $\varphi$ in $\Tfeq$ such that $$\lim_{n\to \infty} \mu_{f(n),g(n)}(\{A\in\Kfeq(f(n),g(n))\mid A\models\varphi\}) = 0.$$
\end{proposition}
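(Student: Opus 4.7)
The plan is to take $\varphi$ to be the sentence
\[
\varphi \;:=\; \A{x \in P} \A{y \in O} \E{z \in O} (y \neq z \wedge E_x(y,z)),
\]
which asserts that no $E_a$-equivalence class is a singleton.

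First, I would verify $\varphi \in \Tfeq$ as a one-point extension axiom. Let $A \in \Kfeq$ be the two-element structure with one parameter $a$ and one object $b$, and let $B = A \cup \{b'\}$ with $E_a(b,b')$ (a valid member of $\Kfeq$, since it merely specifies that $E_a$ has a class of size $2$ and is the identity on the rest). Then $(A,B)$ is a one-point extension, so by Theorem~\ref{thm:axiomatization} its extension axiom, which is exactly (equivalent to) the statement $\varphi$, is in $\Tfeq$.

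Next, I would compute $\mu_{f(n),g(n)}(\varphi)$ explicitly. A structure in $\Kfeq(f(n),g(n))$ is nothing but a choice of a partition of the $f(n)$-element object sort for each of the $g(n)$ parameters, so the uniform measure corresponds to $g(n)$ independent uniform random partitions of $[f(n)]$. Writing $\alpha_n$ for the number of partitions of $[n]$ containing no singleton block and $B_n$ for the Bell number,
\[
\mu_{f(n),g(n)}(\varphi) \;=\; \left(\frac{\alpha_{f(n)}}{B_{f(n)}}\right)^{g(n)} \;\leq\; \frac{\alpha_{f(n)}}{B_{f(n)}},
\]
using $g(n) \geq 1$ for $n$ sufficiently large.

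Finally, the heart of the proof is the combinatorial estimate $\alpha_n/B_n \to 0$. Classifying a partition of $[n]$ by its set of singleton blocks yields the exponential-generating-function identity $\sum B_n x^n/n! = e^x \sum \alpha_n x^n/n!$; differentiating (and using $B'(x) = e^x B(x)$) gives the recurrence $\alpha_{n+1} + \alpha_n = B_n$, hence $\alpha_n \leq B_{n-1}$ and so $\alpha_n/B_n \leq B_{n-1}/B_n$. The latter tends to $0$ by the well-known super-exponential growth of the Bell numbers (indeed $B_n/B_{n-1} \sim n/\log n$). Since $f$ is increasing, $f(n) \to \infty$, and we conclude $\mu_{f(n),g(n)}(\varphi) \to 0$.

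The main obstacle is the combinatorial estimate $\alpha_n/B_n \to 0$; once this is in hand, the rest is just unpacking the uniform measure and invoking a single one-point extension axiom.
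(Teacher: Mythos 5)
Your proof is correct, but it takes a genuinely different route from the paper's. The paper witnesses the failure with the sentence asserting that every $E_x$-class meets every $E_{x'}$-class for distinct parameters $x, x'$, and then only sketches the asymptotics: a uniform random partition of an $m$-set has roughly $m/\log m$ blocks of average size roughly $\log m$, so typical classes are far too small to meet all classes of an independently chosen partition. Your sentence --- no $E_x$-class is a singleton --- simplifies matters in two ways: it involves only one parameter at a time, so the only use of independence across parameters is the trivial bound $(\alpha_{f(n)}/B_{f(n)})^{g(n)} \le \alpha_{f(n)}/B_{f(n)}$, and it reduces the probabilistic content to the single exact statement that almost every uniform random set partition has a singleton block, which you establish cleanly via the identity $B(x) = e^x A(x)$, the recurrence $\alpha_{n+1} + \alpha_n = B_n$, and $B_{n-1}/B_n \to 0$. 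This makes your estimate essentially self-contained and fully rigorous (modulo the standard fact $B_n/B_{n-1} \to \infty$), whereas the paper's argument, as written, would need a concentration step to pass from the expected number of blocks to the stated conclusion; what the paper's choice of sentence buys is a statement that more vividly reflects the genericity being violated, namely the interaction between distinct equivalence relations. Both arguments exploit the same underlying phenomenon (uniform random partitions have many small blocks), and both implicitly read ``increasing'' as implying $f(n) \to \infty$. Your verification that $\varphi \in \Tfeq$ is fine, with the small caveat that the one-point extension axiom for $(A,B)$ implies $\varphi$ rather than being literally identical to it.
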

\begin{proof}
An example of such a sentence $\varphi$ is $$\A{(x:P)}\A{(x':P)}\A{(y:O)}\A{(y':O)}\E{(z:O)} (y\neq y'\rightarrow (E_x(y,z)\land E_{x'}(y',z))),$$ which expresses that any two equivalence classes for distinct equivalence relations intersect. $\varphi$ is in $\Tfeq$, since for any structure in $\Kfeq$ with parameters $a\neq a'$ and objects $b,b'$ (possibly $b = b'$), we can add a new object element $c$ which is $E_a$-equivalent to $b$ and $E_{a'}$-equivalent to $b'$, so $\varphi$ is implied by the relevant one-point extension axioms.

I will sketch the asymptotics: the measure $\mu_{f(n),g(n)}$ on amounts to picking $g(n)$ equivalence relations on a set of size $f(n)$ uniformly and independently. The expected number of equivalence classes in an equivalence relation on a set of size $n$, chosen uniformly, grows asymptotically as $\frac{n}{\log(n)}(1 + o(1))$ \cite[Proposition VIII.8]{FS}. Thus, most of the $g(n)$ equivalence relations have equivalence classes which are much smaller (with average size approximately $\log(n)$) than the number of classes, and the probability that every $E_a$-class is large enough to intersect every $E_b$-class nontrivially for all distinct $a$ and $b$ converges to $0$.
\end{proof}

Proposition~\ref{prop:notuniform1} shows that $\Tfeq$ is not the almost-sure theory of $\Kfeq$ for the measures $\mu_{f(n),g(n)}$, but it would be interesting to know whether such an almost-sure theory exists.

\begin{question}\label{question:feq01}
Does the class $\Kfeq$ have a first-order zero-one law for the measures $\mu_{f(n),g(n)}$? If so, does the almost-sure theory depend on the relative growth rates of $f$ and $g$?
\end{question}

\subsection{The theory \texorpdfstring{$\TCPZ$}{T\_CPZ}}\label{sec:TCPZ}

Let $L$ be the language with a symbol $E_n(\overline{x};\overline{y})$ of arity $2n$ for all $n\geq 1$. Then $\KCPZ$ is the class of finite $L$-structures with the property that $E_n$ is an equivalence relation on $n$-tuples for all $n$, and there is a single $E_n$-class consisting of all $n$-tuples which do \emph{not} consist of $n$ distinct elements. 

$\KCPZ$ is a \Fraisse class. We define $\TCPZ$ to be the generic theory of $\KCPZ$. In \cite{CPZ}, Casanovas, Pel\'{a}ez, and Ziegler introduced the theory $\TCPZ$ and showed that it has NSOP$_2$ and is not simple. For completeness, I will show how to combine the ``independence lemma" from \cite{CPZ} with the $3$-amalgamation criterion due to Chernikov and Ramsey \cite{CR} to show that, in fact, $\TCPZ$ has NSOP$_1$. 

We write $\ind[u]$ for coheir independence: given a model $M$ and tuples $a$ and $b$, $a\ind[u]_M b$ if and only if $\mathrm{tp}(a/Mb)$ is finitely satisfiable in $M$; that is, for every formula $\varphi(x,m,b)\in \mathrm{tp}(a/Mb)$, there exists $m'\in M$ such that $\models \varphi(m',m,b)$.

\begin{theorem}[\cite{CR}, Theorem 5.7]\label{thm:CR}
$T$ has NSOP$_1$ if and only if for every $M\models T$ and $b_0c_0\equiv_M b_1c_1$ such that $c_1\ind[u]_M c_0$, $c_0\ind[u]_M b_0$, and $c_1\ind[u]_M b_1$, there exists $b$ such that $bc_0\equiv_M b_0c_0\equiv_M b_1c_1 \equiv_M bc_1$.
\end{theorem}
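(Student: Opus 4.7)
The plan is to prove both directions separately, exploiting global coheir extensions and coheir Morley sequences to bridge amalgamation failures and SOP$_1$ configurations.

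For the forward direction, assume $T$ is NSOP$_1$ and that we have $M$, $b_0c_0\equiv_M b_1c_1$, with $c_1\ind[u]_M c_0$, $c_0\ind[u]_M b_0$, and $c_1\ind[u]_M b_1$. Suppose for contradiction that $\text{tp}(b_0/Mc_0)\cup \text{tp}(b_1/Mc_1)$ is inconsistent. Since $c_0\equiv_M c_1$, by compactness we may find a single formula $\varphi(x,y)\in L(M)$ such that $\models \varphi(b_0,c_0)\land \varphi(b_1,c_1)$ while $\{\varphi(x,c_0),\varphi(x,c_1)\}$ is inconsistent. The coheir independence $c_1\ind[u]_M c_0$ allows us to extend $(c_0,c_1)$ to an $M$-coheir Morley sequence $(c_i)_{i<\omega}$, along which $\{\varphi(x,c_i),\varphi(x,c_j)\}$ is inconsistent for all $i<j$ (by finite satisfiability transferring inconsistency) while each individual $\varphi(x,c_i)$ is consistent (witnessed by an automorphic image of $b_0$). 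The heart of the argument is to bootstrap this into an SOP$_1$-tree: using iterated coheir amalgamation over $M$, one builds a binary tree of parameters $(a_\eta)_{\eta\in 2^{<\omega}}$ such that sibling children are coheir-independent (yielding inconsistency of sibling instances of $\varphi$) while each branch admits a consistent realization (by a compactness argument using images of $b_0$). This witnesses SOP$_1$, contradicting the hypothesis.

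For the reverse direction, assume $T$ has SOP$_1$ witnessed by a formula $\varphi(x,y)$ and a tree $(a_\eta)_{\eta\in 2^{<\omega}}$. A Ramsey-theoretic argument, extracting a sufficiently indiscernible subtree over a small model $M$, upgrades the situation so that the two children $c_0:=a_{\langle 0\rangle}$ and $c_1:=a_{\langle 1\rangle}$ of the root satisfy $c_0\equiv_M c_1$ and $c_1\ind[u]_M c_0$, and so that realizations $b_0, b_1$ of the two branch-types may be chosen with $c_0\ind[u]_M b_0$ and $c_1\ind[u]_M b_1$. The resulting data satisfies all the hypotheses of the amalgamation criterion. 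Yet no $b$ with $bc_0\equiv_M b_0c_0$ and $bc_1\equiv_M b_1c_1$ can exist, since such a $b$ would satisfy the inconsistent pair $\{\varphi(x,c_0),\varphi(x,c_1)\}$. Hence the amalgamation criterion fails.

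The main obstacle is the tree-building step in the forward direction: turning a single pair $(c_0,c_1)$ with inconsistent $\varphi$-instances into a genuine SOP$_1$-tree. This requires iterated coheir amalgamation where each new pair of siblings is coheir-independent while the branch structure remains globally consistent, and ultimately relies on the fact that coheir satisfies an independence-theorem-like property on one side over arbitrary bases. A secondary difficulty in the reverse direction is extracting enough indiscernibility from the original SOP$_1$-tree to secure all three coheir-independence conditions simultaneously; this is where the Ramsey-theoretic machinery for indiscernible trees becomes essential.
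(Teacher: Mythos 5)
First, a point of orientation: the paper does not prove this statement at all --- it is imported verbatim as Theorem 5.7 of Chernikov--Ramsey \cite{CR}, and the author explicitly invites the reader to treat the amalgamation condition as the \emph{definition} of NSOP$_1$. So the only proof to compare against is the one in \cite{CR}. Your outline does track that proof's architecture faithfully: the forward direction is by contrapositive, turning a failure of the amalgamation into an SOP$_1$-tree built from coheir sequences, and the reverse direction extracts a coheir-independent failed configuration from an SOP$_1$-tree using tree-indexed indiscernibles. The preliminary reductions you carry out are also fine: compactness does yield a single formula $\varphi(x,y)$ over $M$ with $\models\varphi(b_0,c_0)\land\varphi(b_1,c_1)$ and $\{\varphi(x,c_0),\varphi(x,c_1)\}$ inconsistent, and since $c_0\equiv_M c_1$, the global $M$-finitely satisfiable extension of $\text{tp}(c_1/Mc_0)$ does generate an $M$-indiscernible Morley sequence beginning with $c_0,c_1$, along which $\varphi$ is $2$-inconsistent with each instance consistent. (Minor quibble: that last transfer is by $M$-indiscernibility of the sequence, not by ``finite satisfiability transferring inconsistency.'')

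The genuine gap is that the two steps you flag as ``the main obstacle'' and the ``secondary difficulty'' are not difficulties to be resolved later --- they \emph{are} the theorem. A $2$-inconsistent $M$-indiscernible sequence with consistent instances is a dividing/TP$_2$-flavored configuration and is nowhere near SOP$_1$; the entire content of the forward direction in \cite{CR} is the inductive construction of the binary tree in which siblings are arranged via coheirs so that the $\eta^\frown\langle 1\rangle$-versus-descendants-of-$\eta^\frown\langle 0\rangle$ inconsistencies hold while every branch remains consistent, and this requires a careful bookkeeping of which tuples are finitely satisfiable over which, not merely ``an independence-theorem-like property on one side.'' Likewise, in the reverse direction, producing $b_0,b_1$ realizing the two branch types with $b_0c_0\equiv_M b_1c_1$, $c_1\ind[u]_M c_0$, $c_0\ind[u]_M b_0$, and $c_1\ind[u]_M b_1$ simultaneously requires choosing the model $M$ and the subtree via the modeling property for strongly indiscernible trees; asserting that ``a Ramsey-theoretic argument upgrades the situation'' names the tool without performing the extraction, and it is not obvious a priori that all three coheir conditions can be secured at once. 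As written, your text is a correct plan of attack that reproduces the statement of the hard lemmas rather than a proof of them; to stand on its own it would need the tree construction and the indiscernible-tree extraction carried out in full, or else an explicit citation to the corresponding propositions of \cite{CR}, which is what the paper itself does.
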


For our purposes, the reader can take the independent $3$-amalgamation condition in Theorem~\ref{thm:CR} as the definition of NSOP$_1$. For the original definition and further discussion of this property, see~\cite{CR} or~\cite{DS}.

\begin{lemma}[\cite{CPZ}, Lemma 4.2]\label{lem:CPZ}
Let $a,b,c,d',d''$ be tuples and $F$ a finite set from a model $M\models \TCPZ$. Assume that $a$ and $c$ have only elements of $F$ in common ($a\ind[=]_F c$). If $d'a\equiv_F d'b \equiv_F d''b\equiv_F d''c$, then there exists $d$ such that $da\equiv_F d'a \equiv_F d''c\equiv_F dc$. 
\end{lemma}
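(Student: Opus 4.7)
The plan is to realize the partial type $q(x) := \mathrm{tp}(d'/Fa) \cup \mathrm{tp}(d''/Fc)$, where $x$ plays the role of $d'$ in the first piece and of $d''$ in the second; any realization will serve as the required $d$. Since $\TCPZ$ is $\omega$-categorical, the monster is saturated over finite sets, so it suffices to check that $q(x)$ is consistent. Chaining the given equivalences yields $d'a \equiv_F d''c$ and in particular $d' \equiv_F d''$, so together with $a \ind[=]_F c$ the equality constraints on $d$ imposed by the two halves of $q(x)$ are mutually compatible.

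By compactness and quantifier elimination, it is enough to produce, for each finite $a_0 \subseteq a$ and $c_0 \subseteq c$, a structure $N \in \KCPZ$ with domain $F \cup a_0 \cup c_0 \cup d$ such that the substructure on $F \cup a_0 \cup c_0$ is the one inherited from $M$, the substructure on $F \cup d \cup a_0$ is isomorphic to $F \cup d' \cup a_0$ in $M$ (fixing $Fa_0$), and similarly $F \cup d \cup c_0 \cong F \cup d'' \cup c_0$ (fixing $Fc_0$). These demands fix each $E_n$-class of any tuple contained in one of the three regions $Fa_0c_0$, $Fda_0$, $Fdc_0$, and they agree on overlaps since $d' \equiv_F d''$. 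On tuples with elements from both $a_0 \setminus F$ and $c_0 \setminus F$, assign $E_n$-classes as freely as possible, merging only as transitivity requires. By saturation, $N$ then embeds into $M$ over $F \cup a_0 \cup c_0$, producing the desired $d$.

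The key verification is that $N \in \KCPZ$, i.e.\ that $E_n$ remains transitive. The nontrivial case is a forced chain $\bar{u}_1\,E_n\,\bar{v}\,E_n\,\bar{u}_3$ with $\bar{u}_1 \in (Fa_0)^n$, $\bar{v} \in (Fd)^n$, and $\bar{u}_3 \in (Fc_0)^n$; this demands $\bar{u}_1\,E_n\,\bar{u}_3$ already in $M$, since both endpoints are genuine $M$-tuples. This is where $b$ is essential. The hypothesis supplies three automorphisms of $M$ fixing $F$ pointwise: $\alpha \in \mathrm{Aut}(M/Fd')$ with $\alpha(a) = b$ (from $d'a \equiv_F d'b$), $\gamma \in \mathrm{Aut}(M/Fb)$ with $\gamma(d') = d''$ (from $d'b \equiv_F d''b$), and $\beta \in \mathrm{Aut}(M/Fd'')$ with $\beta(b) = c$ (from $d''b \equiv_F d''c$). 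Write $\bar{v}^{d'}, \bar{v}^{d''}$ for the realizations of $\bar{v}$ with $d'$ or $d''$ substituted for the $d$-variables; the given equivalences read $\bar{u}_1\,E_n\,\bar{v}^{d'}$ and $\bar{v}^{d''}\,E_n\,\bar{u}_3$ in $M$. Applying $\alpha$, then $\gamma$, then $\beta^{-1}$ (each chosen so that its fixed set contains the endpoint that must be preserved) produces, inside $M$, the chain
\[
\bar{u}_1 \; E_n \; \bar{v}^{d'} \; E_n \; \alpha(\bar{u}_1) \; E_n \; \bar{v}^{d''} \; E_n \; \beta^{-1}(\bar{u}_3) \; E_n \; \bar{u}_3,
\]
and transitivity of $E_n$ in $M$ then gives $\bar{u}_1\,E_n\,\bar{u}_3$. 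A general forced chain in $N$ decomposes into consecutive segments of this form, so $N \in \KCPZ$ follows.

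The main obstacle is exactly this chain computation: the four-step hypothesis is precisely tuned to let one route an equivalence through $b$, with the middle automorphism $\gamma$ (fixing $Fb$ and swapping $d'$ with $d''$) serving as the bridge that joins the $d'$-side to the $d''$-side. Without such a common translation point there is no reason to expect the forced identifications in $Fa_0c_0$ to already hold in $M$, and $q(x)$ could be inconsistent.
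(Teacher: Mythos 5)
The paper offers no proof of this lemma --- it is imported verbatim as \cite[Lemma 4.2]{CPZ} --- so your proposal has to stand on its own. Your strategy is the right one: realize $\mathrm{tp}(d'/Fa)\cup\mathrm{tp}(d''/Fc)$ by building, for finite pieces, a structure $N\in\KCPZ$ that glues the $M$-structure on $Fa_0c_0$ to copies of $Fd'a_0$ and $Fd''c_0$, and verify transitivity of each $E_n$ by routing forced identifications through $b$ via the automorphisms $\alpha$, $\gamma$, $\beta$. Your six-term chain is correct link by link, and the compatibility of the equality constraints (via $d'a\equiv_F d''c$ and $a\ind[=]_F c$) is fine.

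The gap is the assertion that the configuration $\bar{u}_1\,E_n\,\bar{v}\,E_n\,\bar{u}_3$ with $\bar{u}_1\in(Fa_0)^n$, $\bar{v}\in(Fd)^n$, $\bar{u}_3\in(Fc_0)^n$ is \emph{the} nontrivial case and that every forced chain decomposes into segments of this form. It does not: you must check that the transitive closure of the three partial relations restricts correctly to \emph{each} of the three regions, and your case only handles chains that exit and re-enter $(Fa_0c_0)^n$. A chain with both endpoints in the region $(Fda_0)^n$, say $\bar{v}\,E_n\,\bar{u}_3\,E_n\,\bar{u}_1$ with $\bar{v}\in(Fd)^n$, $\bar{u}_3\in(Fc_0)^n$, $\bar{u}_1\in(Fa_0)^n$ (first link from the copy of $Fd''c_0$, second from $M$ on $Fa_0c_0$), forces $\bar{v}\,E_n\,\bar{u}_1$ in $N$, i.e.\ it requires the implication: if $\bar{v}^{d''}\,E_n\,\bar{u}_3$ and $\bar{u}_3\,E_n\,\bar{u}_1$ in $M$, then $\bar{v}^{d'}\,E_n\,\bar{u}_1$ in $M$. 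This is a genuinely different statement from your case and does not follow from it; it is true, by the same routing (apply $\beta^{-1}$ to get $\bar{v}^{d''}\,E_n\,\beta^{-1}(\bar{u}_3)$ with $\beta^{-1}(\bar{u}_3)\in(Fb)^n$, hence $\bar{u}_3\,E_n\,\beta^{-1}(\bar{u}_3)$; then apply $\gamma^{-1}$ to get $\bar{v}^{d'}\,E_n\,\beta^{-1}(\bar{u}_3)$ and chain), but it must be stated and proved, as must the symmetric case with $a$ and $c$ exchanged. Together with a short reduction showing that every chain shortens using one of these three triangle configurations (using that the three relations agree pairwise on $(Fa_0)^n$, $(Fc_0)^n$, and $(Fd)^n$), the argument closes; as written, the blanket decomposition claim is false and the verification that $N\in\KCPZ$ is incomplete. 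A last small point: if $d'$ is infinite you should also pass to finite subtuples of $d'$, $d''$ so that $N$ is actually a finite structure.
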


\begin{corollary}\label{cor:CPZ}
$\TCPZ$ has NSOP$_1$.
\end{corollary}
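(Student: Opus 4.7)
The plan is to verify the Chernikov--Ramsey criterion in Theorem~\ref{thm:CR} by reducing it to Lemma~\ref{lem:CPZ}. Fix $M\models\TCPZ$ and tuples $b_0,c_0,b_1,c_1$ with $b_0c_0\equiv_M b_1c_1$, $c_1\ind[u]_M c_0$, $c_0\ind[u]_M b_0$, and $c_1\ind[u]_M b_1$; we seek $b$ with $b_0c_0\equiv_M bc_0\equiv_M bc_1\equiv_M b_1c_1$. First, the middle equivalence $bc_0\equiv_M bc_1$ follows by transitivity from the two outer ones together with the hypothesis $b_0c_0\equiv_M b_1c_1$, so it is enough to produce $b$ with $bc_0\equiv_M b_0c_0$ and $bc_1\equiv_M b_1c_1$. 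By compactness in a monster model, this reduces to producing, for every finite $F\subseteq M$, a tuple $b_F$ with $b_Fc_0\equiv_F b_0c_0$ and $b_Fc_1\equiv_F b_1c_1$. Since finite satisfiability in $M$ precludes any equality between elements of distinct tuples unless the element is already in $M$, we may enlarge $F$ to contain $c_0\cap c_1$ (and any other inter-tuple intersections that happen to lie in $M$).

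We then apply Lemma~\ref{lem:CPZ} with the identifications $d'=b_0$, $d''=b_1$, $a=c_0$, $c=c_1$; the disjointness hypothesis $c_0\ind[=]_F c_1$ holds by our choice of $F$. It remains to exhibit a witness $b'$ with $b_0c_0\equiv_F b_0b'\equiv_F b_1b'\equiv_F b_1c_1$, from which the lemma directly yields the desired $b_F$. Observe that the middle equivalence $b_0b'\equiv_F b_1b'$ is automatic whenever $b'$ realizes both $\mathrm{tp}(c_0/b_0F)$ and $\mathrm{tp}(c_1/b_1F)$, via the chain $b_0b'\equiv_F b_0c_0\equiv_F b_1c_1\equiv_F b_1b'$. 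So the construction of $b'$ reduces to realizing the partial type $\mathrm{tp}(c_0/b_0F)\cup\mathrm{tp}(c_1/b_1F)$.

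Using coheir extension applied to $c_0\ind[u]_M b_0$, choose $b'$ realizing $\mathrm{tp}(c_0/Mb_0)$ with $b'\ind[u]_M b_0b_1c_0c_1$; the first conjunct is then immediate. For the second, one must verify $\mathrm{tp}(b'/Mb_1)=\mathrm{tp}(c_1/Mb_1)$. Both types are finitely satisfiable in $M$ (for $b'$ by construction, for $c_1$ by hypothesis) and both extend the common restriction $\mathrm{tp}(c_0/M)=\mathrm{tp}(c_1/M)$. The main obstacle is establishing this equality, since coheir extensions to a larger base are not unique in general; this is precisely the point at which the third hypothesis $c_1\ind[u]_M c_0$ should be invoked. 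The verification should go through by appealing to the explicit quantifier-free description of $\TCPZ$-types: a type is determined by $E_n$-class data on tuples, the $E_n$-relationships between $b'$ and $Mb_1$ are governed by the independence choices made in constructing $b'$, and the three coheirs together force these choices to match the $E_n$-relationships between $c_1$ and $Mb_1$ — closing the amalgamation diamond and producing the witness $b'$ needed to feed into Lemma~\ref{lem:CPZ}.
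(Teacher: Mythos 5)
Your reduction to Lemma~\ref{lem:CPZ} is set up correctly and follows the paper's strategy: pass to a finite $F\subseteq M$ by compactness, enlarge $F$ so that $c_0\ind[=]_F c_1$ (which the hypothesis $c_1\ind[u]_M c_0$ licenses), and then feed a witness $b'$ with $b_0c_0\equiv_F b_0b'\equiv_F b_1b'\equiv_F b_1c_1$ into the lemma. The gap is in the construction of $b'$. You reduce it to realizing $\text{tp}(c_0/Fb_0)\cup\text{tp}(c_1/Fb_1)$, take $b'$ to be a coheir extension of $\text{tp}(c_0/Mb_0)$, and then need $\text{tp}(b'/Mb_1)=\text{tp}(c_1/Mb_1)$. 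As you yourself note, coheir extensions are not unique, and nothing you have said forces these two coheirs of $\text{tp}(c_0/M)=\text{tp}(c_1/M)$ to agree over $Mb_1$: in $\TCPZ$ the $E_n$-classes of tuples mixing $b'$ with $b_1$ can be assigned in many different ways, each finitely satisfiable in $M$, just as a coheir in the random graph can be connected to a new vertex or not. Worse, the statement you are trying to prove here---amalgamating $\text{tp}(c_0/Mb_0)$ and $\text{tp}(c_1/Mb_1)$ over their common restriction to $M$---has exactly the same shape and difficulty as the original problem of amalgamating $\text{tp}(b_0/Mc_0)$ and $\text{tp}(b_1/Mc_1)$, so the appeal to ``the explicit quantifier-free description of types'' defers the entire content of the corollary to an unproved claim.

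The fix, which is the one genuinely non-formal step in the paper's proof, is to find the witness \emph{inside $M$}: since $F$ is finite and $\TCPZ$ is countably categorical, $q(x,y)=\text{tp}(b_0c_0/F)$ is isolated by a single formula, and $q(b_0,y)\in\text{tp}(c_0/Mb_0)$, which is finitely satisfiable in $M$ by the hypothesis $c_0\ind[u]_M b_0$. Hence there is $b'\in M$ with $\models q(b_0,b')$, and then $\models q(b_1,b')$ comes for free from $b_0\equiv_M b_1$ precisely because $b'$ lies in $M$. This yields the required chain $b_0c_0\equiv_F b_0b'\equiv_F b_1b'\equiv_F b_1c_1$ with no uniqueness claim about coheirs, and it is also where the reduction to finite $F$ earns its keep: it lets you replace a type by a single formula so that finite satisfiability applies.
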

\begin{proof}
Suppose we are given $M\models \TCPZ$ and $d'a \equiv_M d''c$ such that $c\ind[u]_M a$, $a\ind[u]_M d'$, and $c\ind[u]_M d''$. Let $p(x,y) = \mathrm{tp}(d'a/M) = \mathrm{tp}(d''c/M)$. To verify the condition in Theorem~\ref{thm:CR}, we need to show that $p(x,a)\cup p(x,c)$ is consistent. 

Suppose it is inconsistent. Then there is some finite subset $F\subseteq M$ such that letting $q(x,y) = \mathrm{tp}(d'a/F) = \mathrm{tp}(d''c/F)$, $q(x,a)\cup q(x,c)$ is inconsistent. Since $c\ind[u]_M a$, we certainly have $c\ind[=]_M a$. By increasing $F$, we may assume that $c\ind[=]_F a$. By countable categoricity, $q$ is isolated by a single formula $\theta(x,y)$ over $F$, and $\theta(d',y)\in \mathrm{tp}(a/Md')$, so by finite satisfiability there exists $b$ in $M$ satisfying $q(d',b)$. Since $d'\equiv_M d''$, we also have $\models q(d'',b)$. 

Now the assumptions of Lemma~\ref{lem:CPZ} are satisfied, and we can find $d$ satisfying $q(d,a)$ and $q(d,c)$, which contradicts inconsistency.
\end{proof}

Now we turn to pseudofiniteness of $\TCPZ$. The strategy is the same as in Section~\ref{sec:Tfeq}: filter the \Fraisse class $\KCPZ$ by bounding the number of equivalence classes, and expand to a class with disjoint $n$-amalgamation for all $n$ by naming the classes. 

\begin{theorem}\label{thm:cpzpseudfo}
$\TCPZ$ is pseudofinite.
\end{theorem}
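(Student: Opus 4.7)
The plan is to follow the strategy used for $\Tfeq$ in Theorem~\ref{thm:tfeq}: filter $\KCPZ$ by bounding the number of non-degenerate equivalence classes of each $E_n$, then expand each layer by class-indicator predicates so that disjoint $k$-amalgamation for all $k$ becomes available, and apply Corollary~\ref{cor:filtered} together with Theorem~\ref{thm:main}.

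For each $m \geq 1$, let $K_m \subseteq \KCPZ$ be the subclass of structures in which, for every $n \geq 1$, the relation $E_n$ has at most $m$ equivalence classes of non-degenerate $n$-tuples. Since each finite structure has only finitely many non-degenerate tuples of each arity, $\bigcup_{m=1}^\infty K_m = \KCPZ$. I claim each $K_m$ is a \Fraisse class: the hereditary property is immediate, and disjoint amalgamation of $A \hookrightarrow B$, $A \hookrightarrow C$ proceeds by handling each $E_n$ independently---number the $E_n^A$-classes $1, \dots, l$, extend to labelings of $B$ and $C$ using at most $m$ labels each, identify classes bearing the same label in the amalgam $D$, and send any non-degenerate tuple mixing elements of $B \setminus A$ and $C \setminus A$ to the class labeled $1$. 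By Corollary~\ref{cor:filtered}, it suffices to show that each $T_{K_m}$ is pseudofinite.

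Expand $L$ to $L_m'$ by adding, for each $n \geq 1$ and each $1 \leq i \leq m$, an $n$-ary relation symbol $C_{n,i}$, and let $K_m'$ consist of the expansions $A'$ of structures $A \in K_m$ in which each non-degenerate $n$-tuple satisfies exactly one $C_{n,i}$, no degenerate tuple satisfies any $C_{n,i}$, and $E_n(\overline{x}, \overline{y})$ holds iff $\overline{x}$ and $\overline{y}$ are either both degenerate or both labeled by the same $C_{n,i}$. Then $K_m'$ is a \Fraisse expansion of $K_m$: every $K_m$-structure admits such a labeling, and given a one-point extension $(A, B)$ with expansion $A' \in K_m'$, each $E_n^B$-class extending some $E_n^A$-class inherits its label from $A'$, while each entirely new $E_n^B$-class (composed only of tuples containing the new point) receives a fresh label from $\{1, \dots, m\}$, which is available because $B \in K_m$ has at most $m$ non-degenerate $E_n$-classes.

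The key step is showing that $T_{K_m'}$ has basic disjoint $k$-amalgamation for every $k \geq 2$. In a coherent $\mathcal{P}^-([k])$-family of quantifier-free types on variables $x_1, \dots, x_k$, any non-degenerate $n$-subtuple with $n < k$ sits inside some proper subset $S$, so its $C_{n,i}$-label is already determined by $p_S$; coherence ensures agreement on overlaps, and since each $p_S$ uses labels in $\{1, \dots, m\}$, the non-degenerate $E_n$-classes across the $k$-variable amalgam number at most $m$. For $n = k$ the $k$-permutations of $(x_1, \dots, x_k)$ are free, and we assign them all the label $1$, yielding a single non-degenerate $E_k$-class. For $n > k$ no non-degenerate $n$-tuples on $k$ variables exist. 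The resulting structure lies in $K_m'$, solving the $k$-amalgamation problem. Theorem~\ref{thm:main} applied to $K_m'$ then yields pseudofiniteness of $T_{K_m'}$; pseudofiniteness passes to its reduct $T_{K_m}$ (Remark~\ref{rem:reduct}), and Corollary~\ref{cor:filtered} completes the argument. The main technical obstacle is the uniformity needed in the unbounded-arity language: disjoint amalgamation within $K_m$, the \Fraisse expansion property of $K_m'$ over $K_m$, and basic $k$-amalgamation for $T_{K_m'}$ must all be verified simultaneously for every arity $n$, organizing the bookkeeping so that label counts remain bounded by $m$ at every stage.
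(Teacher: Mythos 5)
Your proposal is correct and follows essentially the same route as the paper's proof: filter $\KCPZ$ by bounding the number of non-degenerate classes of each $E_k$, expand each layer by class-naming predicates, verify disjoint $n$-amalgamation for the expansion, and conclude via Theorem~\ref{thm:main}, Remark~\ref{rem:reduct}, and Corollary~\ref{cor:filtered}. The only differences are cosmetic (the paper reserves an extra label $C^0_k$ for the degenerate class rather than leaving degenerate tuples unlabeled), and your explicit handling of mixed tuples in the amalgamation and of the three cases $n<k$, $n=k$, $n>k$ is, if anything, slightly more careful than the original.
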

\begin{proof}
For $n\geq 1$, let $K_n$ be the subclass of $\KCPZ$ consisting of those structures with the property that for all $k$, the equivalence relation $E_k$ has at most $n$ classes, in addition to the class of redundant tuples.

$K_n$ has the hereditary property, and the joint embedding property follows from the amalgamation property by taking $A$ to be the empty structure. For the disjoint amalgamation property, we wish to amalgamate embeddings $f\colon A\hookrightarrow B$ and $g\colon A\hookrightarrow C$ of structures in $K_n$. We specify a structure $D$ with domain $A\cup (B\setminus f[A])\cup (C\setminus g[A])$ into which $B$ and $C$ embed in the obvious way over $A$. Since the relations $E_k$ are independent, we can do this separately for each. Make sure to put all redundant $k$-tuples into the $E_k$-class reserved for them, number the $E_k$-classes which intersect $A$ nontrivially, then go on to number the classes which just appear in $B$ and $C$, and merge those classes which are assigned the same number, exactly as in Theorem~\ref{thm:tfeq}.

For any structure $A$ in $\KCPZ$, if $|A| = N$, then the number of $n$-tuples consisting of distinct elements from $A$ reaches its maximum of $N!$ when $n = N$. When $n>N$, every $n$-tuple from $A$ contains repeated elements. So the number of $E_n$-classes is bounded above by $N!+1$ for all $n$, and $A\in K_{N!+1}$. Hence $\Kfeq = \bigcup_{n = 1}^\infty K_n$. So $\KCPZ$ is a filtered \Fraisse class, and by Corollary~\ref{cor:filtered}, it suffices to show that each $T_{K_n}$ is pseudofinite.

Let $L_n'$ be the expanded language which includes, in addition to the relations $E_k$, $(n+1)$ $k$-ary relation symbols $C_k^0(\overline{x}), \dots, C_k^n(\overline{x})$ for each $k$. Let $K_n'$ be the class of finite $L_n'$-structures which are expansions of structures in $K_n$ such that for all $k$, each $E_k$-class is picked out by one of the $C_k^i$, with the class of redundant tuples picked out by $C^0_k$. 

We have $K_n = \{A\restriction L \mid A\in K_n'\}$, since every structure in $K_n$ can be expanded to one in $K_n'$ by labeling the classes for each equivalence relation. Suppose now that $(A,B)$ is a one-point extension in $K_n$, and $A'$ is an expansion of $A$ to a structure in $K_n'$. If any $k$-tuple involving the new element $b$ is part of a class which exists in $A$, we label it by the appropriate $C^i_k$. If adding the new element adds new $E_k$-classes, we simply label these classes by unused $C^j_k$ (by the bound $n$ on the number of classes, there will always be enough of the $C^j_k$). So $K_n'$ is a \Fraisse expansion of $K_n$.

It remains to show that $T_{K_n'}$ has disjoint $n$-amalgamation for all $n$. Suppose we have a coherent $\mathcal{P}^-([n])$-family of types. As noted before, the relations $E_k$ are independent, so we can handle them each separately. And the behavior of $E_k$ is entirely determined by the behavior of the relations $C^i_k$, so it suffices to set these. But the only restriction here is that every $k$-tuple should satisfy exactly one $C^i_k$, and it should be $C^0_k$ if and only if the tuple contains repeated elements. So to solve our amalgamation problem, we simply assign relations from the $C^i_k$ arbitrarily to those non-redundant $k$-tuples which are not already determined by the types in the family.

Hence $T_{K_n'}$ has disjoint $n$-amalgamation for all $n$, so it and its reduct $T_{K_n}$ are pseudofinite. 
\end{proof}

\begin{proposition}\label{prop:notuniform2}
There is a sentence $\varphi$ in $\TCPZ$ such that $$\lim_{n\to \infty} \mu_n(\{A\in\KCPZ(n)\mid A\models\varphi\}) = 0.$$
\end{proposition}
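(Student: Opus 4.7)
The plan is to mimic the strategy of Proposition~\ref{prop:notuniform1}: pick a one-point-extension axiom of $\TCPZ$ which forces some equivalence class to be ``large,'' and then invoke asymptotic estimates for uniform random set partitions to show that this density property almost surely fails under $\mu_n$. A convenient choice is
$$\varphi \;=\; \forall x\,\exists y\,\bigl(x\neq y \wedge E_1(x,y)\bigr),$$
the sentence asserting that $E_1$ has no singleton class. To see $\varphi \in \TCPZ$, let $A$ be the (unique) $\KCPZ$-structure on $\{a\}$ and $B$ any $\KCPZ$-structure on $\{a,b\}$ with $a\neq b$ and $E_1(a,b)$; then $(A,B)$ is a one-point extension, whose axiom from Theorem~\ref{thm:axiomatization} immediately implies $\varphi$.

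For the asymptotic side, a structure in $\KCPZ(n)$ is determined by choosing, independently for each $1\le k\le n$, an equivalence relation on the set of distinct $k$-tuples from $[n]$ (with the redundant tuples forced into a single class when $k\ge 2$); no axiom of $\KCPZ$ couples different $E_k$'s. Hence $\mu_n$ factors as a product of uniform laws, and the marginal of $E_1$ is the uniform distribution on the $B_n$ set partitions of $[n]$. Consequently, $\mu_n([\varphi])$ equals the probability that a uniformly chosen set partition of $[n]$ has no singleton block. A standard saddle-point analysis of the EGFs $e^{e^x-1}$ and $e^{e^x-1-x}$ (see e.g.\ \cite{FS}) gives
$$\mu_n([\varphi]) \;=\; \frac{n!\,[x^n]\,e^{e^x-1-x}}{B_n} \;\sim\; \frac{\log n}{n} \;\longrightarrow\; 0,$$
as required. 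Equivalently, the expected number of singletons in a uniform random partition of $[n]$ grows like $\log n$, which rules out the event that there are none.

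I do not expect a substantive obstacle: this is a direct, and in fact simpler, analog of Proposition~\ref{prop:notuniform1}. The only mildly subtle point is the product decomposition of $\mu_n$, but this reduces to the observation that $\KCPZ$ imposes no cross-constraints between the $E_k$'s of different arities, so each $E_k$ can be chosen independently under the uniform measure.
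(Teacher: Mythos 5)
Your proof is correct, but it takes a genuinely different route from the paper's. The paper also picks a one-point-extension consequence that fails asymptotically, but its sentence is $\A{x}\A{y}\A{y'}\E{z}\, E_1(x,z)\land E_2(y,y';x,z)$, which couples $E_1$ and $E_2$: the failure comes from comparing the number of $E_2$-classes (about $n^2/\log(n^2)$) to the typical size of an $E_1$-class (about $\log n$), so the map $\rho_x$ from the $E_1$-class of $x$ to $E_2$-classes cannot be surjective. Your sentence $\A{x}\E{y}\,(x\neq y\land E_1(x,y))$ involves only $E_1$, and the failure is the classical fact that a uniform random set partition of $[n]$ almost surely has a singleton block. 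Both arguments rest on the same two pillars — the product decomposition of $\mu_n$ over the independently chosen $E_k$'s, and \cite[Ch.~VIII]{FS}-style asymptotics for random set partitions — and both are sketched at a comparable level of rigor. Your version is somewhat more elementary: it needs only the marginal of $E_1$ and the standard ``partitions without singletons'' estimate (equivalently, that the number of singleton blocks is concentrated around its mean $nB_{n-1}/B_n\sim W(n)\to\infty$, so the probability it is zero vanishes), whereas the paper's version needs the joint behavior of two relations. One small remark: the precise asymptotic $\sim\frac{\log n}{n}$ is more than you need and would require a little care to pin down exactly; for the proposition it suffices that the ratio $n!\,[x^n]e^{e^x-1-x}/B_n$ tends to $0$, which your saddle-point (or an inclusion--exclusion/Poisson approximation) argument does deliver. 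Your verification that the sentence lies in $\TCPZ$ via the unique one-element structure and the two-element extension with $E_1(a,b)$ is fine.
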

\begin{proof}
An example of such a sentence $\varphi$ is $\A{x}\A{y}\A{y'}\E{z} (E_1(x,z)\land E_2(y,y';x,z))$. This sentence says that for all $x$, the function $\rho_x$ mapping an element $z$ in the $E_1$-class of $x$ to the $E_2$-class of $xz$ is surjective onto the $E_2$-classes. $\varphi$ is in $\TCPZ$, since for any $A$ in $\Kfeq$ and elements $a$, $b$, and $b'$ in $A$, we can embed $A$ in a structure $B$ in $\Kfeq$ with an object $c$ such that $c$ is $E_1$-equivalent to $a$ and $(a,c)$ is $E_2$-equivalent to $(b,b')$. If $b = b'$, we must take $a = c$; otherwise, we can add a new element satisfying this condition. So $\varphi$ is implied by the relevant one-point extension axioms.

The measure $\mu_n$ amounts to picking an equivalence relation on the $k$-tuples of distinct elements from a set of size $n$ for each $k$ uniformly and independently. Since our sentence only involves $E_1$ and $E_2$, we just need consider the equivalence relations on $1$-tuples (there are $n$ of them) and the non-redundant $2$-tuples (of which there are $n^2-n$). Citing again the fact that the expected number of equivalence classes in a random equivalence relation grows asymptotically as $\frac{n}{\log(n)}(1 + o(1))$ \cite[Proposition VIII.8]{FS}, we see that with high probability there are more $E_2$-classes ($\frac{n^2}{\log(n^2-n)}(1+o(1))$) than the size of the average $E_1$-class ($\log(n)$), in which case the function $\rho_x$ is not surjective for all $x$, and the probability that $\varphi$ is satisfied converges to $0$.
\end{proof}

In this case, too, it would be interesting to know whether there is a zero-one law for the uniform measures.

\sloppy
\printbibliography

\end{document}